\newtheorem{theorem}{Theorem}
\theoremstyle{plain}
\newtheorem{corollary}{Corollary}
\newtheorem{definition}{Definition}
\newtheorem{example}{Example}
\newtheorem{lemma}{Lemma}
\newtheorem{remark}{Remark}
\numberwithin{equation}{section}
\begin{document}
\title[Quantum integral inequalities]{Quantum Montgomery identity and some
quantum integral inequalities}
\author[M. Kunt, A. Kashuri, T.-S. Du]{Mehmet Kunt$^{1}$, Artion Kashuri$^{2}
$ and Tingsong Du$^{3,4,\ast }$}
\address{$^{1}$Department of Mathematics, Faculty of Sciences, Karadeniz
Technical University, 61080, Trabzon, Turkey}
\email{mkunt@ktu.edu.tr}
\address{$^{2}$Department of Mathematics, Faculty of Technical Science,
University Ismail Qemali, Vlora, Albania}
\email{artionkashuri@gmail.com}
\address{$^{3}$Department of Mathematics, College of Science, China Three
Gorges University, 443002, Yichang, P. R. China; $^{4}$Three Gorges
Mathematical Research Center, China Three Gorges University, 443002,
Yichang, P. R. China}
\email{tingsongdu@ctgu.edu.cn}
\thanks{$^{\ast }$Corresponding author}
\subjclass[2010]{26D15, 26A51, 05A30}
\keywords{Convex functions; quantum differentiable; quantum integrable;
Ostrowski type inequality}

\begin{abstract}
We discover a new version of the celebrated Montgomery identity via quantum
integral operators and establish certain  quantum integral inequalities of
Ostrowski type by using this identity. Relevant connections of the results
obtained in this work with those deduced in earlier published papers are
also considered.
\end{abstract}

\maketitle

\section{Introduction}

The following inequality is named the Ostrowski type inequality \cite{O38}.

\begin{theorem}
\label{1.1}\cite{DR02}. Let $f:\left[ a,b\right] \rightarrow
\mathbb{R}
$ be a differentiable mapping on $\left( a,b\right) $ and $f^{\prime }\in L%
\left[ a,b\right] $. If $\left\vert f^{\prime }\left( x\right) \right\vert
<M $ where $x\in \left[ a,b\right] $, then the following inequality holds:%
\begin{equation}
\left\vert f\left( x\right) -\frac{1}{b-a}\int_{a}^{b}f\left( t\right)
dt\right\vert \leq \frac{M}{b-a}\left[ \frac{\left( x-a\right) ^{2}+\left(
b-x\right) ^{2}}{2}\right] \text{ }  \label{1-1}
\end{equation}%
for all $x\in \left[ a,b\right] $.
\end{theorem}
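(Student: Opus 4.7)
The plan is to prove inequality \eqref{1-1} via the classical Montgomery identity. First, I would introduce the Peano kernel
\begin{equation*}
p(x,t) = \begin{cases} t-a, & a \leq t \leq x, \\ t-b, & x < t \leq b, \end{cases}
\end{equation*}
and establish the representation
\begin{equation*}
f(x) - \frac{1}{b-a}\int_a^b f(t)\,dt = \frac{1}{b-a}\int_a^b p(x,t)\,f'(t)\,dt.
\end{equation*}
This identity follows by splitting the right-hand integral at the point $x$ and applying integration by parts to each piece: the boundary terms evaluate to $(x-a)f(x) + (b-x)f(x) = (b-a)f(x)$, while the remaining integrals combine to $-\int_a^b f(t)\,dt$.

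Next, I would take absolute values in the identity and invoke the hypothesis $|f'(t)| < M$ to bound the integrand, yielding
\begin{equation*}
\left| f(x) - \frac{1}{b-a}\int_a^b f(t)\,dt \right| \leq \frac{M}{b-a}\int_a^b |p(x,t)|\,dt.
\end{equation*}
Finally, a direct computation of the kernel integral gives
\begin{equation*}
\int_a^b |p(x,t)|\,dt = \int_a^x (t-a)\,dt + \int_x^b (b-t)\,dt = \frac{(x-a)^2 + (b-x)^2}{2},
\end{equation*}
and inserting this into the previous bound yields \eqref{1-1}.

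The main obstacle is really only the clean derivation of the Montgomery identity itself; once that is in hand, the remainder reduces to a one-line estimate and an elementary kernel integral. A mild technical point is that $f'$ is only assumed to be in $L[a,b]$, so strictly speaking integration by parts should be justified by invoking absolute continuity of $f$ (which is implicit in the assumption that $f$ is differentiable with integrable derivative on $(a,b)$), but this does not alter the computation. Notably, this same kernel-based strategy is precisely what the authors intend to generalize to the $q$-calculus setting later in the paper.
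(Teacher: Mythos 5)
Your proposal is correct and follows essentially the same route the paper indicates: it is exactly the Montgomery identity $\left( \ref{1-2}\right)$, with your kernel $p(x,t)$ being precisely the piecewise weight $t-a$ on $[a,x]$ and $t-b$ on $(x,b]$, followed by the standard absolute-value estimate and the elementary computation $\int_a^b |p(x,t)|\,dt = \frac{(x-a)^2+(b-x)^2}{2}$. The paper cites this theorem from the literature rather than reproving it, but your integration-by-parts derivation of the identity and the subsequent bound are both correct and are the intended argument.
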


To prove the Ostrowski type inequality above, the following famous
Montgomery identity is very useful, see \cite{MPF91}:%
\begin{equation}
f\left( x\right) =\frac{1}{b-a}\int_{a}^{b}f\left( t\right) dt+\int_{a}^{x}%
\frac{t-a}{b-a}f^{\prime }\left( t\right) dt+\int_{x}^{b}\frac{t-b}{b-a}%
f^{\prime }\left( t\right) dt,  \label{1-2}
\end{equation}%
where $f\left( x\right) $ is a continuous function on $\left[ a,b\right] $
with a continuous first derivative in $\left( a,b\right) $.

By changing variable, the Montgomery identity $\left( \ref{1-2}\right) $ can
be expressed in the following way:

\begin{equation}
f\left( x\right) -\frac{1}{b-a}\int_{a}^{b}f\left( t\right) dt=\left(
b-a\right) \int_{0}^{1}K\left( t\right) f^{\prime }\left( tb+\left(
1-t\right) a\right) dt,  \label{1-3}
\end{equation}%
where
\begin{equation*}
K\left( t\right) =\left\{
\begin{array}{lll}
t, & t\in \big[0,\frac{x-a}{b-a}\big], &  \\
t-1, & t\in \big( \frac{x-a}{b-a},1\big]. &
\end{array}%
\right .
\end{equation*}

A number of different identities of the Montgomery type were investigated
and many Ostrowski type inequalities were obtained by using these
identities. For example, through the framework of Montgomery\textexclamdown
\={}%
s identity, Cerone and Dragomir \cite{CD03} developed a systematic study
which produced some novel inequalities. By introducing some parameters,
Budak and Sar\i kaya \cite{BS16} as well as \"{O}zdemir et al. \cite{OKA14}
established the generalized Montgomery-type identities for differentiable
mappings and certain generalized Ostrowski-type inequalities, respectively.
Then in \cite{A14}, Aljinovi\'{c} presented another simpler generalization
of the Montgomery identity for fractional integrals by utilizing the
weighted Montgomery identity. Further the generalized Montgomery identity
involving the Ostrowski type inequalities in question with applications to
local fractional integrals can be found in \cite{SB17}. For more related
results considering the different Montgomery identities, The interested
reader is referred, for example, to \cite{A95,KOA11,KS15,L08,SOS12} and the
references therein.

However, to the best of our knowledge, the quantum Montgomery type identity
has not obtained so far. This paper aims to investigate, by setting up a
quantum Montgomery identity and by the help of this identity, some new
quantum integral inequalities such as Ostrowski type, midpoint type, etc. We
shall deal with mappings whose derivatives in absolute value are quantum
differentiable convex mappings.

Throughout this paper, let $0<q<1$ be a constant. It is known that quantum
calculus constructs in a quantum geometric set. That is, if $qx\in A$ for
all $x\in A,$ then the set $A$ is called quantum geometric.

Suppose that $f\left( t\right) $ is an arbitrary function defined on the
interval $\left[ 0,b\right] $. Clearly, for $b>0$, the interval $[0,b]$ is a
quantum geometric set. The quantum derivative of $f\left( t\right) $ is
defined with the following expression:%
\begin{equation}
\begin{array}{c}
D_{q}f\left( t\right)%
\end{array}%
:=\frac{f\left( t\right) -f\left( qt\right) }{\left( 1-q\right) t},t\neq 0,
\label{1-4}
\end{equation}%
\begin{equation*}
\begin{array}{c}
D_{q}f\left( 0\right)%
\end{array}%
:=\underset{t\rightarrow 0}{\lim }%
\begin{array}{c}
D_{q}f\left( t\right).%
\end{array}%
\end{equation*}%
Note that
\begin{equation}
\underset{q\rightarrow 1^{-}}{\lim }%
\begin{array}{c}
D_{q}f\left( t\right)%
\end{array}%
=\frac{df\left( t\right) }{dt},  \label{1-4a}
\end{equation}%
if $f\left( t\right) $ is differentiable.

The definite quantum integral of $f\left( t\right) $ is defined as:%
\begin{equation}
\int_{0}^{b}f\left( t\right) \text{ }d_{q}t=\left( 1-q\right)
b\sum_{n=0}^{\infty }q^{n}f\left( q^{n}b\right)  \label{1-5}
\end{equation}%
and
\begin{equation}
\int_{c}^{b}f\left( t\right) \text{ }d_{q}t=\int_{0}^{b}f\left( t\right)
\text{ }d_{q}t-\int_{0}^{c}f\left( t\right) \text{ }d_{q}t,  \label{1-6}
\end{equation}%
where $0<c<b$, see \cite{AM12,KC01}.

Note that if the series in right-hand side of $\left( \ref{1-5}\right) $ is
convergence, then $\int_{0}^{b}f\left( t\right) $ $d_{q}t$ is exist, i.e., $%
f\left( t\right) $ is quantum integrable on $\left[ 0,b\right] $. Also,
provided that if $\int_{0}^{b}f\left( t\right) $ $dt$ converges, then one
has
\begin{equation}
\underset{q\rightarrow 1^{-}}{\lim }\int_{0}^{b}f\left( t\right) \text{ }%
d_{q}t=\int_{0}^{b}f\left( t\right) \text{ }dt.\text{ \ \ (\cite[page 6]%
{AM12})}  \label{1-6a}
\end{equation}

These definitions are not sufficient in establishing integral inequalities
for a function defined on an arbitrary closed interval $\left[ a,b\right]
\subset
\mathbb{R}
$. Due to this fact, Tariboon and Ntouyas in \cite{TN13,TN14} improved these
definitions as follows:

\begin{definition}
\label{1.2} \cite{TN14,TN13}. For a continuous function $f:\left[ a,b\right]
\rightarrow
\mathbb{R}
$, the $q$-derivative of $f$ at $t\in \left[ a,b\right] $ is characterized
by the expression:
\begin{equation}
\begin{array}{c}
_{a}D_{q}f\left( t\right)%
\end{array}%
=\frac{f\left( t\right) -f\left( qt+\left( 1-q\right) a\right) }{\left(
1-q\right) \left( t-a\right) },\text{ }t\neq a,  \label{1-7}
\end{equation}%
\begin{equation*}
\begin{array}{c}
_{a}D_{q}f\left( a\right)%
\end{array}%
=\underset{t\rightarrow a}{\lim }%
\begin{array}{c}
_{a}D_{q}f\left( t\right).%
\end{array}%
\end{equation*}%
The function $f$ is said to be $q$-differentiable on $\left[ a,b\right] $,
if $%
\begin{array}{c}
_{a}D_{q}f\left( t\right)%
\end{array}%
$ exists for all $t\in \left[ a,b\right] $.
\end{definition}

Clearly, if $a=0$ in $\left( \ref{1-7}\right) $, then$%
\begin{array}{c}
_{0}D_{q}f\left( t\right) =D_{q}f\left( t\right),
\end{array}%
$ where $D_{q}f\left( t\right) $ is familiar quantum derivatives given in $%
\left( \ref{1-4}\right) $.

\begin{definition}
\label{1.3} \cite{TN14,TN13}. Let $f:\left[ a,b\right] \rightarrow
\mathbb{R}
$ be a continuous function. Then the quantum definite integral on $\left[ a,b%
\right] $ is delineated as%
\begin{equation}
\int_{a}^{b}f\left( t\right)
\begin{array}{c}
_{a}d_{q}t%
\end{array}%
=\left( 1-q\right) \left( b-a\right) \sum\limits_{n=0}^{\infty }q^{n}f\left(
q^{n}b+\left( 1-q^{n}\right) a\right)  \label{1-8}
\end{equation}%
and%
\begin{equation}
\int_{c}^{b}f\left( t\right)
\begin{array}{c}
_{a}d_{q}t%
\end{array}%
=\int_{a}^{b}f\left( t\right)
\begin{array}{c}
_{a}d_{q}t%
\end{array}%
-\int_{a}^{c}f\left( t\right)
\begin{array}{c}
_{a}d_{q}t,%
\end{array}
\label{1-9}
\end{equation}%
where $a<c<b$.
\end{definition}

Clearly, if $a=0$ in $\left( \ref{1-8}\right) $, then
\begin{equation*}
\int_{0}^{b}f\left( t\right)
\begin{array}{c}
_{0}d_{q}t%
\end{array}%
=\int_{0}^{b}f\left( t\right)
\begin{array}{c}
d_{q}t,%
\end{array}%
\end{equation*}%
where $\int_{0}^{b}f\left( t\right)
\begin{array}{c}
d_{q}t%
\end{array}%
$ is familiar definite quantum integrals on $\left[ 0,b\right] $ given in $%
\left( \ref{1-5}\right) $.

Definition \ref{1.2} and Definition \ref{1.3} have actually developed
previous definitions and have been widely used for quantum integral
inequalities. There is a lot of remarkable papers about quantum integral
inequalities based on these definitions, including Kunt et al. \cite{KLID19}
in the study of the quantum Hermite--Hadamard inequalities for mappings of
two variables considering convexity and quasi-convexity on the co-ordinates,
Noor et al. \cite{NAN16,NNA15,NNA15a} in quantum Ostrowski-type inequalities
for quantum differentiable convex mappings, quantum estimates for
Hermite--Hadamard inequalities via convexity and quasi-convexity, quantum
analogues of Iyengar type inequalities for some classes of preinvex
mappings, as well as Tun\c{c} et al. \cite{TGB18} in the Simpson-type
inequalities for convex mappings via quantum integral operators. For more
results related to the quantum integral operators, the interested reader is
directed, for example, to \cite{AS17,KIAS18, Liu2017, SNT15,ZLP19} and the
references cited therein.

In \cite{ASKI16}, Alp et al. proved the following inequality named quantum
Hermite--Hadamard type inequality. Also in \cite{ZDWS18}, Zhang et al.
proved the same inequality with the fewer assumptions and shorter method.

\begin{theorem}
\label{1.4}Let $f:\left[ a,b\right] \rightarrow
\mathbb{R}
$ be a convex function with $0<q<1$. Then we have%
\begin{equation}
f\left( \frac{qa+b}{1+q}\right) \leq \frac{1}{b-a}\int_{a}^{b}f\left(
t\right)
\begin{array}{c}
_{a}d_{q}t%
\end{array}%
\leq \frac{qf\left( a\right) +f\left( b\right) }{1+q}.  \label{1-10}
\end{equation}
\end{theorem}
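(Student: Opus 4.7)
The plan is to handle the two inequalities in (\ref{1-10}) separately, reducing each to the series definition (\ref{1-8}) of the quantum integral. The right-hand inequality will come directly from termwise convexity, while the left-hand inequality will come from a supporting-line argument at the point $c=\frac{qa+b}{1+q}$. The key preliminary observation is that this $c$ is precisely the quantum mean of the interval, in the sense that
\begin{equation*}
\frac{1}{b-a}\int_{a}^{b} t \,\,_{a}d_{q}t \;=\; \frac{qa+b}{1+q}.
\end{equation*}
I would verify this first by plugging $f(t)=t$ into (\ref{1-8}), splitting the sum $\sum q^n[q^n b+(1-q^n)a]$ into two geometric series $\sum q^{2n}=\frac{1}{1-q^2}$ and $\sum q^n(1-q^n)=\frac{1}{1-q}-\frac{1}{1-q^2}$, and simplifying.

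For the right-hand inequality, I would apply the definition (\ref{1-8}), noting that each sample point $q^n b+(1-q^n)a$ is a convex combination of $a$ and $b$. Convexity of $f$ then gives $f(q^n b+(1-q^n)a)\leq q^n f(b)+(1-q^n) f(a)$. Multiplying by $q^n$, summing, and dividing by $b-a$, I obtain
\begin{equation*}
\frac{1}{b-a}\int_{a}^{b} f(t)\,\,_{a}d_{q}t \;\leq\; (1-q)\!\left[f(b)\sum_{n=0}^{\infty} q^{2n}+f(a)\sum_{n=0}^{\infty} q^n(1-q^n)\right]\!,
\end{equation*}
and the same geometric-series evaluation as above collapses the bracket to $\frac{f(b)}{1+q}+\frac{qf(a)}{1+q}$, which is the desired bound.

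For the left-hand inequality, I would observe that $c=\frac{qa+b}{1+q}$ lies strictly inside $(a,b)$ since $c-a=\frac{b-a}{1+q}$ and $b-c=\frac{q(b-a)}{1+q}$ are both positive. By convexity of $f$, there exists a real number $m$ (any subgradient at $c$) such that $f(t)\geq f(c)+m(t-c)$ for all $t\in[a,b]$. Integrating this inequality with respect to $\,_{a}d_{q}t$ over $[a,b]$ and invoking the preliminary computation shows
\begin{equation*}
\int_{a}^{b} f(t)\,\,_{a}d_{q}t \;\geq\; (b-a)f(c)+m\!\left[\int_{a}^{b}\! t \,\,_{a}d_{q}t-(b-a)c\right]=(b-a)f(c),
\end{equation*}
since the bracket vanishes. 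Dividing by $b-a$ yields the left-hand inequality.

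The potential obstacle is justifying the existence of the subgradient $m$ at an endpoint of the interval, but this is avoided because $c$ is always an interior point of $(a,b)$; apart from that, everything reduces to the (straightforward but crucial) explicit evaluation of the quantum integral of $t$.
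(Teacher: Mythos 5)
Your proposal is correct. Note that the paper itself gives no proof of Theorem \ref{1.4}: it is quoted from Alp et al.\ \cite{ASKI16} and Zhang et al.\ \cite{ZDWS18}. Your argument --- termwise convexity of the sample points $q^{n}b+(1-q^{n})a$ for the right-hand bound, and a supporting line at the $q$-mean $\frac{qa+b}{1+q}$ (which you correctly identify as $\frac{1}{b-a}\int_{a}^{b}t\;{}_{a}d_{q}t$) combined with the positivity of the weights $(1-q)(b-a)q^{n}$ for the left-hand bound --- is sound and is essentially the argument used in those references.
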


\section{Main Results}

Firstly, we shall discuss the assumptions of being continuous of the
function $f\left( t\right) $ in the Definition \ref{1.2} and the Definition %
\ref{1.3}. Also, in this conditions, we want to discuss if the similar cases
with $\left( \ref{1-4a}\right) $ and $\left( \ref{1-6a}\right) $ could be
exist.

Considering the Definition \ref{1.2}, it is unnecessary that the function $%
f\left( t\right) $ is being continuous on $\left[ a,b\right] $. Indeed, for
all $t\in \left[ a,b\right] $, $qt+\left( 1-q\right) a\in \left[ a,b\right] $
and $f\left( t\right) -f\left( qt+\left( 1-q\right) a\right) \in
\mathbb{R}
$. It means that $\frac{f\left( t\right) -f\left( qt+\left( 1-q\right)
a\right) }{\left( 1-q\right) \left( t-a\right) }\in
\mathbb{R}
$ exists for all $t\neq a$, i.e., the Definition \ref{1.2} should be as
follows:

\begin{definition}
\label{2.1}(Quantum derivative on $\left[ a,b\right] $) An arbitrary
function $f\left( t\right) $ defined on $\left[ a,b\right] $ is called
quantum differentiable on $\left( a,b\right] $ with the following expression:%
\begin{equation}
\begin{array}{c}
_{a}D_{q}f\left( t\right)%
\end{array}%
=\frac{f\left( t\right) -f\left( qt+\left( 1-q\right) a\right) }{\left(
1-q\right) \left( t-a\right) }\in
\mathbb{R}
,\text{ }t\neq a  \label{2-1a}
\end{equation}%
and quantum differentiable on $t=a$, if the following limit exists:%
\begin{equation*}
\begin{array}{c}
_{a}D_{q}f\left( a\right)%
\end{array}%
=\underset{t\rightarrow a}{\lim }%
\begin{array}{c}
_{a}D_{q}f\left( t\right).%
\end{array}%
\end{equation*}
\end{definition}

\begin{lemma}
\label{2.2}(Similar case with $\left( \ref{1-4a}\right) $) Let $f:\left[ a,b%
\right] \rightarrow
\mathbb{R}
$ be a differentiable function. Then we have%
\begin{equation}
\underset{q\rightarrow 1^{-}}{\lim }%
\begin{array}{c}
_{a}D_{q}f\left( t\right)%
\end{array}%
=\frac{df\left( t\right) }{dt}.  \label{2-1}
\end{equation}
\end{lemma}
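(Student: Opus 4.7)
The plan is to reduce ${}_aD_qf(t)$ to an ordinary backward difference quotient via a single change of variable, after which the statement collapses to the definition of the derivative. For $t\ne a$, observe the algebraic identity $qt+(1-q)a=t-(1-q)(t-a)$. Setting $h:=(1-q)(t-a)$, the expression \eqref{2-1a} becomes
\begin{equation*}
{}_aD_qf(t)=\frac{f(t)-f(t-h)}{h}.
\end{equation*}
Because $t>a$, we have $h\to 0^{+}$ precisely when $q\to 1^{-}$, so the differentiability hypothesis on $f$ at $t$ and the one-sided version of the derivative definition immediately give
\begin{equation*}
\lim_{q\to 1^{-}}{}_aD_qf(t)=\lim_{h\to 0^{+}}\frac{f(t)-f(t-h)}{h}=\frac{df(t)}{dt}.
\end{equation*}

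The one subtlety is the endpoint $t=a$, since Definition~\ref{2.1} already defines ${}_aD_qf(a)$ as a limit $\lim_{s\to a}{}_aD_qf(s)$; verifying \eqref{2-1} there amounts to an interchange of the two limits $q\to 1^-$ and $s\to a$. I would handle it by applying the mean value theorem on $[s-h, s]$ to write ${}_aD_qf(s)=f'(\xi_{s,q})$ for some $\xi_{s,q}\in(s-h,s)$, noting $\xi_{s,q}\to a$ as $s\to a$ uniformly in $q\in(0,1)$, and then concluding by continuity (or, failing a continuity assumption on $f'$, by directly estimating the increment against $f'(a)$ using the differentiability at $a$).

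The main obstacle is precisely this boundary interchange; the interior case is essentially just unwinding notation after recognizing that the $q$-dilation $t\mapsto qt+(1-q)a$ is, in the limit $q\to 1^-$, an infinitesimal backward step of length $(1-q)(t-a)$ anchored at $t$.
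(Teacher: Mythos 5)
Your proof is correct, but it follows a genuinely different and considerably more economical route than the paper's. The paper evaluates the limit by treating the difference quotient as a $\frac{0}{0}$ form in the variable $q$, applying L'H\^{o}pital's rule, and then interchanging the limits $q\rightarrow 1^{-}$ and $h\rightarrow 0$ without justification; moreover its bookkeeping of the factor $(t-a)$ is inconsistent (the derivative of $f\left( qt+\left( 1-q\right) a\right)$ with respect to $q$ is $(t-a)f^{\prime }\left( qt+\left( 1-q\right) a\right)$, and the final substitution $f\left( t+ht-ha\right) \rightsquigarrow f\left( t+h\right)$ silently rescales $h$ -- two slips that happen to cancel). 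Your substitution $h=\left( 1-q\right) \left( t-a\right)$, which rewrites $_{a}D_{q}f\left( t\right)$ as the backward difference quotient $\frac{f\left( t\right) -f\left( t-h\right) }{h}$ with $h\rightarrow 0^{+}$ exactly as $q\rightarrow 1^{-}$ (for $t>a$), reduces the claim directly to the definition of the derivative and avoids both the L'H\^{o}pital step and the limit interchange entirely; this is the cleaner argument. Your additional discussion of the endpoint $t=a$ addresses a case the paper ignores, and it can in fact be settled from differentiability at $a$ alone: writing $f\left( s\right) -f\left( a\right) =f^{\prime }\left( a\right) \left( s-a\right) +o\left( s-a\right)$ and $f\left( qs+\left( 1-q\right) a\right) -f\left( a\right) =qf^{\prime }\left( a\right) \left( s-a\right) +o\left( s-a\right)$ shows $_{a}D_{q}f\left( a\right) =f^{\prime }\left( a\right)$ for every fixed $q$, so no appeal to the mean value theorem or to continuity of $f^{\prime }$ is needed there.
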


\begin{proof}
If $f\left( t\right) $ is differentiable on $\left[ a,b\right] $, then we
have that%
\begin{equation*}
\underset{q\rightarrow 1^{-}}{\lim }%
\begin{array}{c}
_{a}D_{q}f\left( t\right)%
\end{array}%
=\underset{q\rightarrow 1^{-}}{\lim }\frac{f\left( t\right) -f\left(
qt+\left( 1-q\right) a\right) }{\left( 1-q\right) \left( t-a\right) }
\end{equation*}%
\begin{equation*}
\overset{\frac{0}{0}}{=}\underset{q\rightarrow 1^{-}}{\lim }\frac{-\left(
t-a\right) \frac{df\left( qt+\left( 1-q\right) a\right) }{dq}}{-\left(
t-a\right) }
\end{equation*}%
\begin{equation*}
=\underset{q\rightarrow 1^{-}}{\lim }\frac{df\left( qt+\left( 1-q\right)
a\right) }{dq}
\end{equation*}%
\begin{equation*}
=\underset{q\rightarrow 1^{-}}{\lim }\left( \underset{h\rightarrow 0}{\lim }%
\frac{f\left( \left( q+h\right) t+\left( 1-\left( q+h\right) \right)
a\right) -f\left( qt+\left( 1-q\right) a\right) }{h}\right)
\end{equation*}%
\begin{equation*}
=\underset{h\rightarrow 0}{\lim }\left( \underset{q\rightarrow 1^{-}}{\lim }%
\frac{f\left( \left( q+h\right) t+\left( 1-\left( q+h\right) \right)
a\right) -f\left( qt+\left( 1-q\right) a\right) }{h}\right)
\end{equation*}%
\begin{equation*}
=\underset{h\rightarrow 0}{\lim }\frac{f\left( t+ht-ha\right) -f\left(
t\right) }{h}
\end{equation*}%
\begin{equation*}
=\underset{h\rightarrow 0}{\lim }\frac{f\left( t+h\right) -f\left( t\right)
}{h}
\end{equation*}
\begin{equation*}
=\frac{df\left( t\right) }{dt}.
\end{equation*}
\end{proof}

Considering Definition \ref{1.3}, it is unnecessary that the function $%
f\left( t\right) $ is being continuous on $\left[ a,b\right] $. Undoubtedly,
it is not difficult to construct an example for a discontinuous function
that is quantum integrable on $\left[ a,b\right] $.

\begin{example}
\label{2.3}Let $0<q<1$ be a constant, and we define\newline
$A:=\left\{ q^{n}2+\left( 1-q^{n}\right) \left( -1\right)
:n=0,1,2,...,\right\} \subset \left[ -1,2\right] $, $f:\left[ -1,2\right]
\rightarrow
\mathbb{R}
$and $f\left( t\right) :=\left\{
\begin{array}{ccc}
1, & t\in A, &  \\
0, \ \  & t\in \left[ -1,2\right] \backslash A. &
\end{array}%
\right. $ Clearly, the function $f\left( t\right) $ is not continuous on $%
\left[ -1,2\right] $. On the other hand%
\begin{equation*}
\int_{-1}^{2}f\left( t\right)
\begin{array}{c}
_{-1}d_{q}t%
\end{array}%
=\left( 1-q\right) \left( 2-\left( -1\right) \right) \sum_{n=0}^{\infty
}q^{n}f\left( q^{n}2+\left( 1-q^{n}\right) \left( -1\right) \right)
\end{equation*}%
\begin{equation*}
=3\left( 1-q\right) \sum_{n=0}^{\infty }q^{n}=3\left( 1-q\right) \frac{1}{1-q%
}=3,
\end{equation*}
i.e., the function $f\left( t\right) $ is quantum integrable on $\left[ -1,2%
\right]. $
\end{example}

Similarly, the Definition \ref{1.3} should be described in the following way.

\begin{definition}
\label{2.4}(Quantum definite integral on $\left[ a,b\right] $) Let $f:\left[
a,b\right] \rightarrow
\mathbb{R}
$ be an arbitrary function. Then the quantum definite integral on $\left[ a,b%
\right] $ is delineated as%
\begin{equation}
\int_{a}^{b}f\left( t\right)
\begin{array}{c}
_{a}d_{q}t%
\end{array}%
=\left( 1-q\right) \left( b-a\right) \sum\limits_{n=0}^{\infty }q^{n}f\left(
q^{n}b+\left( 1-q^{n}\right) a\right).  \label{2-2}
\end{equation}
If the series in right hand-side of $\left( \ref{2-2}\right) $ is
convergent, then $\int_{a}^{b}f\left( t\right)
\begin{array}{c}
_{a}d_{q}t%
\end{array}%
$ is exist, i.e., $f\left( t\right) $ is quantum integrable on $\left[ a,b%
\right] $.
\end{definition}

\begin{lemma}
\label{2.5}(Similar case with $\left( \ref{1-6a}\right) $) Let $f:\left[ a,b%
\right] \rightarrow
\mathbb{R}
$ be an arbitrary function. Then, provided that if $\int_{a}^{b}f\left(
t\right) $ $dt$ converges, then we have
\begin{equation}
\underset{q\rightarrow 1^{-}}{\lim }\int_{a}^{b}f\left( t\right)
\begin{array}{c}
_{a}d_{q}t%
\end{array}%
=\int_{a}^{b}f\left( t\right) \text{ }dt.  \label{2-3}
\end{equation}
\end{lemma}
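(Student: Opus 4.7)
The plan is to recognize the defining series
\[
\int_{a}^{b}f(t)\,{}_{a}d_{q}t=(1-q)(b-a)\sum_{n=0}^{\infty}q^{n}f\bigl(q^{n}b+(1-q^{n})a\bigr)
\]
as a Riemann-type sum for $\int_{a}^{b}f(t)\,dt$, then let the mesh shrink to $0$ as $q\to 1^{-}$. First I would introduce the partition points $t_{n}:=q^{n}b+(1-q^{n})a=a+q^{n}(b-a)$, so that $t_{0}=b$, $t_{n}\searrow a$, and
\[
t_{n}-t_{n+1}=q^{n}(1-q)(b-a).
\]
With this notation the quantum sum becomes exactly
\[
\sum_{n=0}^{\infty}(t_{n}-t_{n+1})\,f(t_{n}),
\]
i.e.\ a right-endpoint Riemann sum over the (countably infinite) partition $\{t_{n}\}_{n\geq 0}\cup\{a\}$ of $[a,b]$.

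Next I would control the mesh: since $q^{n}\leq 1$ for all $n\geq 0$, the largest subinterval width is $t_{0}-t_{1}=(1-q)(b-a)$, and therefore the mesh of this partition is $(1-q)(b-a)$, which tends to $0$ as $q\to 1^{-}$. Because convergence of $\int_{a}^{b}f(t)\,dt$ in the Riemann sense implies that $f$ is bounded and Riemann integrable on $[a,b]$, every sequence of tagged partitions whose mesh tends to $0$ produces Riemann sums converging to $\int_{a}^{b}f(t)\,dt$. I would conclude by invoking this standard fact.

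The main obstacle is that our ``partition'' has infinitely many pieces, so the classical theorem on convergence of Riemann sums does not apply literally. I would handle this by a truncation argument: fix $\varepsilon>0$, choose $N=N(\varepsilon)$ so large that $t_{N}-a<\varepsilon/(2\|f\|_{\infty})$, and split
\[
\sum_{n=0}^{\infty}(t_{n}-t_{n+1})f(t_{n})=\sum_{n=0}^{N-1}(t_{n}-t_{n+1})f(t_{n})+\sum_{n=N}^{\infty}(t_{n}-t_{n+1})f(t_{n}).
\]
The tail is bounded by $\|f\|_{\infty}(t_{N}-a)<\varepsilon/2$ uniformly in $q$ (for $q$ close enough to $1$, after adjusting $N$ in a monotone way), while the finite head is a genuine right-endpoint Riemann sum on the finite partition $\{a,t_{N},t_{N-1},\dots,t_{0}\}$ of $[a,b]$, whose mesh is still at most $(1-q)(b-a)$. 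Integrability of $f$ then forces the head to approach $\int_{a}^{b}f(t)\,dt$ within $\varepsilon/2$ as $q\to 1^{-}$, and combining these two estimates yields \eqref{2-3}.
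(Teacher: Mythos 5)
Your route is genuinely different from the paper's. The paper does not argue with Riemann sums at all: it observes that $(1-q)(b-a)\sum_{n=0}^{\infty}q^{n}f\left(q^{n}b+\left(1-q^{n}\right)a\right)$ is exactly $(b-a)\int_{0}^{1}g(t)\,{}_{0}d_{q}t$ for $g(t)=f(tb+(1-t)a)$, and then invokes the already-quoted limit $(\ref{1-6a})$ for Jackson integrals on $[0,1]$ (cited from Annaby--Mansour) to pass to $(b-a)\int_{0}^{1}g(t)\,dt=\int_{a}^{b}f(t)\,dt$. You instead re-prove that fact from scratch on $[a,b]$ by reading the series as a tagged sum over the partition $t_{n}=a+q^{n}(b-a)$; this is more self-contained (it is essentially the standard proof of $(\ref{1-6a})$ itself), and your identities $t_{n}-t_{n+1}=q^{n}(1-q)(b-a)$ and the mesh bound $(1-q)(b-a)$ are correct.

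However, the truncation step has a genuine gap as written. You choose $N=N(\varepsilon)$ so that $t_{N}-a<\varepsilon/(2\|f\|_{\infty})$, but $t_{N}-a=q^{N}(b-a)$ depends on $q$, and for fixed $N$ it tends to $b-a$ as $q\to 1^{-}$. Hence no choice of $N$ depending only on $\varepsilon$ controls the tail uniformly for $q$ near $1$; the parenthetical ``after adjusting $N$ in a monotone way'' is precisely where the work lies and is not carried out. The repair is to let $N=N(q)$ grow fast enough that $q^{N(q)}\to 0$ while the mesh $(1-q)(b-a)$ still tends to $0$ (for instance $N(q)=\lceil (1-q)^{-2}\rceil$, which gives $q^{N(q)}\leq e^{-(1-q)N(q)}\to 0$); then the tail is at most $\|f\|_{\infty}\,q^{N(q)}(b-a)\to 0$, and the head is a Riemann sum for $f$ over $[t_{N(q)},b]$ with $t_{N(q)}\to a$, which differs from a full tagged sum of $[a,b]$ by at most another $\|f\|_{\infty}(t_{N(q)}-a)$. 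With that modification, and with the hypothesis read as proper Riemann integrability so that $\|f\|_{\infty}<\infty$ (for merely improperly integrable unbounded $f$ the claim can fail, since the nodes $t_{n}$ accumulate at $a$), your argument closes.
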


\begin{proof}
If $\int_{a}^{b}f\left( t\right) $ $dt$ converges, then $\int_{0}^{1}f\left(
tb+\left( 1-t\right) a\right) $ $dt$ also converges. Using $\left( \ref{1-6a}%
\right) $, we have that%
\begin{equation*}
\underset{q\rightarrow 1^{-}}{\lim }\int_{a}^{b}f\left( t\right)
\begin{array}{c}
_{a}d_{q}t%
\end{array}%
=\underset{q\rightarrow 1^{-}}{\lim }\left[ \left( 1-q\right) \left(
b-a\right) \sum\limits_{n=0}^{\infty }q^{n}f\left( q^{n}b+\left(
1-q^{n}\right) a\right) \right]
\end{equation*}%
\begin{equation*}
=\left( b-a\right) \underset{q\rightarrow 1^{-}}{\lim }\int_{0}^{1}f\left(
tb+\left( 1-t\right) a\right)
\begin{array}{c}
_{0}d_{q}t%
\end{array}%
\end{equation*}%
\begin{equation*}
=\left( b-a\right) \int_{0}^{1}f\left( tb+\left( 1-t\right) a\right) dt
\end{equation*}%
\begin{equation*}
=\int_{a}^{b}f\left( t\right) \text{ }dt.
\end{equation*}
\end{proof}

We next present a important quantum Montgomery identity, which is similar
with the identity in $\left( \ref{1-3}\right) $.

\begin{lemma}
\label{2.6}(Quantum Montgomery identity) Let $f:\left[ a,b\right]
\rightarrow
\mathbb{R}
$ be an arbitrary function with $%
\begin{array}{c}
_{a}D_{q}f%
\end{array}%
$ is quantum integrable on $\left[ a,b\right] $, then the following quantum
identity holds:%
\begin{equation}
f\left( x\right) -\frac{1}{b-a}\int_{a}^{b}f\left( t\right)
\begin{array}{c}
_{a}d_{q}t%
\end{array}%
=\left( b-a\right) \int_{0}^{1}K_{q}\left( t\right)
\begin{array}{c}
_{a}D_{q}f\left( tb+\left( 1-t\right) a\right)%
\end{array}%
\begin{array}{c}
_{0}d_{q}t,%
\end{array}
\label{2-4}
\end{equation}%
where%
\begin{equation*}
K_{q}\left( t\right) =\left\{
\begin{array}{lll}
qt & , & t\in \big[ 0,\frac{x-a}{b-a}\big], \\
qt-1 & , & t\in \big( \frac{x-a}{b-a},1\big].%
\end{array}%
\right.
\end{equation*}
\end{lemma}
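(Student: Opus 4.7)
The plan is to reduce the right-hand side of $(\ref{2-4})$ to the left-hand side by a direct evaluation, using only the series definition $(\ref{2-2})$ of the ${}_{0}d_{q}$-integral and the defining formula $(\ref{2-1a})$ of ${}_{a}D_{q}f$. Put $c=(x-a)/(b-a)$ and $g(t)={}_{a}D_{q}f(tb+(1-t)a)$. First I would split the kernel integral at $c$ according to the piecewise definition of $K_{q}$ and rearrange,
\begin{equation*}
\int_{0}^{1}K_{q}(t)g(t)\,{}_{0}d_{q}t=\int_{0}^{c}qt\,g(t)\,{}_{0}d_{q}t+\int_{c}^{1}(qt-1)g(t)\,{}_{0}d_{q}t=\int_{0}^{1}qt\,g(t)\,{}_{0}d_{q}t-\int_{c}^{1}g(t)\,{}_{0}d_{q}t,
\end{equation*}
so that only two scalar quantum integrals remain to be evaluated.

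For $\int_{0}^{1}qt\,g(t)\,{}_{0}d_{q}t$, I would plug in the series formula together with ${}_{a}D_{q}f(t_{n})=[f(t_{n})-f(t_{n+1})]/[(1-q)q^{n}(b-a)]$, where $t_{n}=q^{n}b+(1-q^{n})a$. After cancellations the expression takes the form $\frac{q}{b-a}\sum_{n\geq 0}q^{n}[f(t_{n})-f(t_{n+1})]$. This series does not collapse directly because of the weight $q^{n}$; the standard device is to shift the summation index in $\sum q^{n}f(t_{n+1})$ and to recognize the resulting tail as $(1-q)\sum q^{n}f(t_{n})=\frac{1}{b-a}\int_{a}^{b}f(u)\,{}_{a}d_{q}u$. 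This identifies the first piece as $\frac{1}{b-a}\bigl[f(b)-\frac{1}{b-a}\int_{a}^{b}f(u)\,{}_{a}d_{q}u\bigr]$.

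For $\int_{c}^{1}g(t)\,{}_{0}d_{q}t$ I would use $\int_{c}^{1}=\int_{0}^{1}-\int_{0}^{c}$ and compute both terms as genuine telescoping sums. Writing $s_{n}=q^{n}x+(1-q^{n})a$, one gets $\int_{0}^{1}g\,{}_{0}d_{q}t=\frac{1}{b-a}\sum[f(t_{n})-f(t_{n+1})]$, which telescopes to $(f(b)-L)/(b-a)$, and $\int_{0}^{c}g\,{}_{0}d_{q}t=\frac{1}{b-a}\sum[f(s_{n})-f(s_{n+1})]$, which telescopes to $(f(x)-L')/(b-a)$. Both sequences $t_{n}$ and $s_{n}$ decrease to $a$, and the hypothesis that ${}_{a}D_{q}f$ is quantum integrable forces the tail values $L$ and $L'$ to coincide, so subtraction yields $\int_{c}^{1}g\,{}_{0}d_{q}t=(f(b)-f(x))/(b-a)$.

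Combining the two pieces and multiplying by $(b-a)$ produces $f(b)-\frac{1}{b-a}\int_{a}^{b}f\,{}_{a}d_{q}u-(f(b)-f(x))=f(x)-\frac{1}{b-a}\int_{a}^{b}f(u)\,{}_{a}d_{q}u$, exactly the left-hand side of $(\ref{2-4})$. The main obstacle I expect is the weighted-telescoping step for $\int_{0}^{1}qt\,g(t)\,{}_{0}d_{q}t$: the geometric weight $q^{n}$ prevents a direct collapse of the series, and the algebra has to be arranged so that the leftover tail is recognizable as the quantum integral of $f$, which is precisely what produces the $(b-a)^{-1}\int_{a}^{b}f\,{}_{a}d_{q}u$ term in $(\ref{2-4})$. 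A secondary delicate point is the cancellation of the endpoint values $L$ and $L'$, which is why the integrability assumption on ${}_{a}D_{q}f$ (rather than any continuity of $f$) is the natural hypothesis for the identity.
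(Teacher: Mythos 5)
Your proposal follows essentially the same route as the paper's proof: split the kernel integral at $(x-a)/(b-a)$, rewrite everything in terms of $\int_{0}^{1}qt\,g$, $\int_{0}^{1}g$ and $\int_{0}^{c}g$, expand each piece with the series definition of the quantum integral and the difference-quotient form of ${}_{a}D_{q}f$, and telescope, with the geometrically weighted sum producing the $\frac{1}{b-a}\int_{a}^{b}f(t)\,{}_{a}d_{q}t$ term exactly as in the paper. Your explicit bookkeeping of the tail limits $L$ and $L'$ is, if anything, more careful than the paper's own treatment (which silently collapses both telescoping series to $f(b)$ and $f(x)$), so the argument matches the published one.
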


\begin{proof}
By the Definition \ref{2.1}, $f\left( t\right) $ is quantum differentiable
on $\left( a,b\right) $ and $%
\begin{array}{c}
_{a}D_{q}f%
\end{array}%
$ is exist. Since $%
\begin{array}{c}
_{a}D_{q}f%
\end{array}%
$ is quantum integrable on $\left[ a,b\right] $, by the Definition \ref{2.4}%
, the quantum integral for the right-side of $\left( \ref{2-4}\right) $ is
exist. Let us start calculating the integral for the right-side of $\left( %
\ref{2-4}\right). $ With the help of $\left( \ref{2-1a}\right) $ and $\left( %
\ref{2-2}\right) $, we have that%
\begin{equation*}
\left( b-a\right) \int_{0}^{1}K_{q}\left( t\right)
\begin{array}{c}
_{a}D_{q}f\left( tb+\left( 1-t\right) a\right)%
\end{array}%
\begin{array}{c}
_{0}d_{q}t%
\end{array}%
\end{equation*}%
\begin{equation*}
=\left( b-a\right) \left[
\begin{array}{l}
\int_{0}^{\frac{x-a}{b-a}}qt%
\begin{array}{c}
_{a}D_{q}f\left( tb+\left( 1-t\right) a\right)%
\end{array}%
\begin{array}{c}
_{0}d_{q}t%
\end{array}
\\
+\int_{\frac{x-a}{b-a}}^{1}\left( qt-1\right)
\begin{array}{c}
_{a}D_{q}f\left( tb+\left( 1-t\right) a\right)%
\end{array}%
\begin{array}{c}
_{0}d_{q}t%
\end{array}%
\end{array}%
\right]
\end{equation*}%
\begin{equation*}
=\left( b-a\right) \left[
\begin{array}{l}
\int_{0}^{\frac{x-a}{b-a}}qt%
\begin{array}{c}
_{a}D_{q}f\left( tb+\left( 1-t\right) a\right)%
\end{array}%
\begin{array}{c}
_{0}d_{q}t%
\end{array}
\\
+\int_{0}^{1}\left( qt-1\right)
\begin{array}{c}
_{a}D_{q}f\left( tb+\left( 1-t\right) a\right)%
\end{array}%
\begin{array}{c}
_{0}d_{q}t%
\end{array}
\\
-\int_{0}^{\frac{x-a}{b-a}}\left( qt-1\right)
\begin{array}{c}
_{a}D_{q}f\left( tb+\left( 1-t\right) a\right)%
\end{array}%
\begin{array}{c}
_{0}d_{q}t%
\end{array}%
\end{array}%
\right]
\end{equation*}%
\begin{equation*}
=\left( b-a\right) \left[
\begin{array}{l}
\int_{0}^{1}\left( qt-1\right)
\begin{array}{c}
_{a}D_{q}f\left( tb+\left( 1-t\right) a\right)%
\end{array}%
\begin{array}{c}
_{0}d_{q}t%
\end{array}
\\
+\int_{0}^{\frac{x-a}{b-a}}%
\begin{array}{c}
_{a}D_{q}f\left( tb+\left( 1-t\right) a\right)%
\end{array}%
\begin{array}{c}
_{0}d_{q}t%
\end{array}%
\end{array}%
\right]
\end{equation*}%
\begin{equation*}
=\left( b-a\right) \left[
\begin{array}{l}
\int_{0}^{1}qt%
\begin{array}{c}
_{a}D_{q}f\left( tb+\left( 1-t\right) a\right)%
\end{array}%
\begin{array}{c}
_{0}d_{q}t%
\end{array}
\\
-\int_{0}^{1}%
\begin{array}{c}
_{a}D_{q}f\left( tb+\left( 1-t\right) a\right)%
\end{array}%
\begin{array}{c}
_{0}d_{q}t%
\end{array}
\\
+\int_{0}^{\frac{x-a}{b-a}}%
\begin{array}{c}
_{a}D_{q}f\left( tb+\left( 1-t\right) a\right)%
\end{array}%
\begin{array}{c}
_{0}d_{q}t%
\end{array}%
\end{array}%
\right]
\end{equation*}%
\begin{equation*}
=\left( b-a\right) \left[
\begin{array}{l}
\int_{0}^{1}qt\frac{f\left( tb+\left( 1-t\right) a\right) -f\left(
qtb+\left( 1-qt\right) a\right) }{\left( 1-q\right) t\left( b-a\right) }%
\begin{array}{c}
_{0}d_{q}t%
\end{array}
\\
-\int_{0}^{1}\frac{f\left( tb+\left( 1-t\right) a\right) -f\left( qtb+\left(
1-qt\right) a\right) }{\left( 1-q\right) t\left( b-a\right) }%
\begin{array}{c}
_{0}d_{q}t%
\end{array}
\\
+\int_{0}^{\frac{x-a}{b-a}}%
\begin{array}{c}
\frac{f\left( tb+\left( 1-t\right) a\right) -f\left( qtb+\left( 1-qt\right)
a\right) }{\left( 1-q\right) t\left( b-a\right) }%
\end{array}%
\begin{array}{c}
_{0}d_{q}t%
\end{array}%
\end{array}%
\right]
\end{equation*}%
\begin{equation*}
=\frac{1}{1-q}\left[
\begin{array}{l}
q\left[ \int_{0}^{1}f\left( tb+\left( 1-t\right) a\right)
\begin{array}{c}
_{0}d_{q}t%
\end{array}%
-\int_{0}^{1}f\left( qtb+\left( 1-qt\right) a\right)
\begin{array}{c}
_{0}d_{q}t%
\end{array}%
\right] \\
-\left[ \int_{0}^{1}\frac{f\left( tb+\left( 1-t\right) a\right) }{t}%
\begin{array}{c}
_{0}d_{q}t%
\end{array}%
-\int_{0}^{1}\frac{f\left( qtb+\left( 1-qt\right) a\right) }{t}%
\begin{array}{c}
_{0}d_{q}t%
\end{array}%
\right] \\
+\left[ \int_{0}^{\frac{x-a}{b-a}}%
\begin{array}{c}
\frac{f\left( tb+\left( 1-t\right) a\right) }{t}%
\end{array}%
\begin{array}{c}
_{0}d_{q}t%
\end{array}%
-\int_{0}^{\frac{x-a}{b-a}}%
\begin{array}{c}
\frac{f\left( qtb+\left( 1-qt\right) a\right) }{t}%
\end{array}%
\begin{array}{c}
_{0}d_{q}t%
\end{array}%
\right]%
\end{array}%
\right]
\end{equation*}%
\begin{equation*}
=\frac{1}{1-q}\left[
\begin{array}{l}
q\left[
\begin{array}{c}
\left( 1-q\right) \sum\limits_{n=0}^{\infty }q^{n}f\left( q^{n}b+\left(
1-q^{n}\right) a\right) \\
-\left( 1-q\right) \sum\limits_{n=0}^{\infty }q^{n}f\left( q^{n+1}b+\left(
1-q^{n+1}\right) a\right)%
\end{array}%
\right] \\
-\left[
\begin{array}{c}
\left( 1-q\right) \sum\limits_{n=0}^{\infty }q^{n}\frac{f\left(
q^{n}b+\left( 1-q^{n}\right) a\right) }{q^{n}} \\
-\left( 1-q\right) \sum\limits_{n=0}^{\infty }q^{n}\frac{f\left(
q^{n+1}b+\left( 1-q^{n+1}\right) a\right) }{q^{n}}%
\end{array}%
\right] \\
+\left[
\begin{array}{c}
\left( 1-q\right) \frac{x-a}{b-a}\sum\limits_{n=0}^{\infty }q^{n}\frac{%
f\left( q^{n}\frac{x-a}{b-a}b+\left( 1-q^{n}\frac{x-a}{b-a}\right) a\right)
}{q^{n}\frac{x-a}{b-a}} \\
-\left( 1-q\right) \frac{x-a}{b-a}\sum\limits_{n=0}^{\infty }q^{n}\frac{%
f\left( q^{n+1}\frac{x-a}{b-a}b+\left( 1-q^{n+1}\frac{x-a}{b-a}\right)
a\right) }{q^{n}\frac{x-a}{b-a}}%
\end{array}%
\right]%
\end{array}%
\right]
\end{equation*}%
\begin{equation*}
=\left[
\begin{array}{l}
q\left[ \sum\limits_{n=0}^{\infty }q^{n}f\left( q^{n}b+\left( 1-q^{n}\right)
a\right) -\sum\limits_{n=0}^{\infty }q^{n}f\left( q^{n+1}b+\left(
1-q^{n+1}\right) a\right) \right] \\
-\left[ \sum\limits_{n=0}^{\infty }f\left( q^{n}b+\left( 1-q^{n}\right)
a\right) -\sum\limits_{n=0}^{\infty }f\left( q^{n+1}b+\left(
1-q^{n+1}\right) a\right) \right] \\
+\left[
\begin{array}{c}
\sum\limits_{n=0}^{\infty }f\left( q^{n}\frac{x-a}{b-q}b+\left( 1-q^{n}\frac{%
x-a}{b-q}\right) a\right) \\
-\sum\limits_{n=0}^{\infty }f\left( q^{n+1}\frac{x-a}{b-q}b+\left( 1-q^{n+1}%
\frac{x-a}{b-q}\right) a\right)%
\end{array}%
\right]%
\end{array}%
\right]
\end{equation*}%
\begin{equation*}
=\left[
\begin{array}{l}
q\left[ \sum\limits_{n=0}^{\infty }q^{n}f\left( q^{n}b+\left( 1-q^{n}\right)
a\right) -\frac{1}{q}\sum\limits_{n=1}^{\infty }q^{n}f\left( q^{n}b+\left(
1-q^{n}\right) a\right) \right] \\
-\left[ \sum\limits_{n=0}^{\infty }f\left( q^{n}b+\left( 1-q^{n}\right)
a\right) -\sum\limits_{n=1}^{\infty }f\left( q^{n}b+\left( 1-q^{n}\right)
a\right) \right] \\
+\left[
\begin{array}{c}
\sum\limits_{n=0}^{\infty }f\left( q^{n}\left( \frac{x-a}{b-a}\right)
b+\left( 1-q^{n}\left( \frac{x-a}{b-a}\right) \right) a\right) \\
-\sum\limits_{n=1}^{\infty }f\left( q^{n}\left( \frac{x-a}{b-a}\right)
b+\left( 1-q^{n}\left( \frac{x-a}{b-a}\right) \right) a\right)%
\end{array}%
\right]%
\end{array}%
\right]
\end{equation*}%
\begin{equation*}
=\left[
\begin{array}{l}
q\left[ \left( 1-\frac{1}{q}\right) \sum\limits_{n=0}^{\infty }q^{n}f\left(
q^{n}b+\left( 1-q^{n}\right) a\right) +\frac{f\left( b\right) }{q}\right] \\
-f\left( b\right) +f\left( \left( \frac{x-a}{b-a}\right) b+\left( 1-\left(
\frac{x-a}{b-a}\right) \right) a\right)%
\end{array}%
\right]
\end{equation*}%
\begin{equation*}
=f\left( x\right) -\left( 1-q\right) \sum\limits_{n=0}^{\infty }q^{n}f\left(
q^{n}b+\left( 1-q^{n}\right) a\right)
\end{equation*}%
\begin{equation*}
=f\left( x\right) -\frac{1}{b-a}\int_{a}^{b}f\left( t\right)
\begin{array}{c}
_{a}d_{q}t,%
\end{array}%
\end{equation*}
which completes the proof.
\end{proof}

\begin{remark}
If one takes limit $q\rightarrow 1^{-}$ on the Quantum Montgomery identity
in $\left( \ref{2-4}\right) $, one has the Montgomery identity in $\left( %
\ref{1-3}\right) $.
\end{remark}

The following calculations of quantum definite integrals are used in next
result:%
\begin{eqnarray}
\int_{0}^{\frac{x-a}{b-a}}qt%
\begin{array}{c}
_{0}d_{q}t%
\end{array}
&=&q\left( 1-q\right) \frac{x-a}{b-a}\sum_{n=0}^{\infty }q^{n}\left( \frac{%
x-a}{b-a}q^{n}\right)   \label{2-5} \\
&=&q\left( 1-q\right) \left( \frac{x-a}{b-a}\right) ^{2}\frac{1}{1-q^{2}}
\notag \\
&=&\frac{q}{1+q}\left( \frac{x-a}{b-a}\right) ^{2},  \notag
\end{eqnarray}%
\begin{eqnarray}
\int_{0}^{\frac{x-a}{b-a}}qt^{2}%
\begin{array}{c}
_{0}d_{q}t%
\end{array}
&=&q\left( 1-q\right) \frac{x-a}{b-a}\sum_{n=0}^{\infty }q^{n}\left( \frac{%
x-a}{b-a}q^{n}\right) ^{2}  \label{2-6} \\
&=&q\left( 1-q\right) \left( \frac{x-a}{b-a}\right) ^{3}\frac{1}{1-q^{3}}
\notag \\
&=&\frac{q}{1+q+q^{2}}\left( \frac{x-a}{b-a}\right) ^{3},  \notag
\end{eqnarray}%
$\allowbreak $%
\begin{eqnarray}
&&\int_{\frac{x-a}{b-a}}^{1}\left( 1-qt\right)
\begin{array}{c}
_{0}d_{q}t%
\end{array}
\label{2-7} \\
&=&\int_{0}^{1}\left( 1-qt\right)
\begin{array}{c}
_{0}d_{q}t%
\end{array}%
-\int_{0}^{\frac{x-a}{b-a}}\left( 1-qt\right)
\begin{array}{c}
_{0}d_{q}t%
\end{array}
\notag \\
&=&\left[
\begin{array}{c}
\left( 1-q\right) \sum_{n=0}^{\infty }q^{n}\left( 1-qq^{n}\right)  \\
-\left( 1-q\right) \frac{x-a}{b-a}\sum_{n=0}^{\infty }q^{n}\left( 1-qq^{n}%
\frac{x-a}{b-a}\right)
\end{array}%
\right]   \notag \\
&=&\left[
\begin{array}{c}
\left( 1-q\right) \left( \frac{1}{1-q}-\frac{q}{1-q^{2}}\right)  \\
-\left( 1-q\right) \frac{x-a}{b-a}\left( \frac{1}{1-q}-\frac{q}{1-q^{2}}%
\frac{x-a}{b-a}\right)
\end{array}%
\right]   \notag \\
&=&\frac{1}{1+q}-\frac{x-a}{b-a}\left( 1-\frac{q}{1+q}\frac{x-a}{b-a}\right)
\notag \\
&=&\frac{1}{1+q}-\left( 1-\frac{b-x}{b-a}\right) \left( \frac{1}{1+q}+\frac{q%
}{1+q}-\frac{q}{1+q}\left( 1-\frac{b-x}{b-a}\right) \right)   \notag \\
&=&\frac{1}{1+q}-\left( 1-\frac{b-x}{b-a}\right) \left( \frac{1}{1+q}+\frac{q%
}{1+q}\left( \frac{b-x}{b-a}\right) \right)   \notag \\
&=&\left[
\begin{array}{c}
\frac{1}{1+q}-\frac{1}{1+q}-\frac{q}{1+q}\left( \frac{b-x}{b-a}\right)  \\
+\frac{q}{1+q}\left( \frac{b-x}{b-a}\right) +\frac{q}{1+q}\left( \frac{b-x}{%
b-a}\right) ^{2}%
\end{array}%
\right] =\frac{q}{1+q}\left( \frac{b-x}{b-a}\right) ^{2},  \notag
\end{eqnarray}%
and
\begin{eqnarray}
&&\int_{\frac{x-a}{b-a}}^{1}\left( t-qt^{2}\right)
\begin{array}{c}
_{0}d_{q}t%
\end{array}
\label{2-8} \\
&=&\int_{0}^{1}\left( t-qt^{2}\right)
\begin{array}{c}
_{0}d_{q}t%
\end{array}%
-\int_{0}^{\frac{x-a}{b-a}}\left( t-qt^{2}\right)
\begin{array}{c}
_{0}d_{q}t%
\end{array}
\notag \\
&=&\left[
\begin{array}{c}
\left( 1-q\right) \sum_{n=0}^{\infty }q^{n}\left( q^{n}-qq^{2n}\right)  \\
-\left( 1-q\right) \frac{x-a}{b-a}\sum_{n=0}^{\infty }q^{n}\left( q^{n}\frac{%
x-a}{b-a}-qq^{2n}\left( \frac{x-a}{b-a}\right) ^{2}\right)
\end{array}%
\right]   \notag \\
&=&\left[
\begin{array}{c}
\left( 1-q\right) \left( \frac{1}{1-q^{2}}-\frac{q}{1-q^{3}}\right)  \\
-\left( 1-q\right) \frac{x-a}{b-a}\left( \frac{1}{1-q^{2}}\frac{x-a}{b-a}-%
\frac{q}{1-q^{3}}\left( \frac{x-a}{b-a}\right) ^{2}\right)
\end{array}%
\right]   \notag \\
&=&\left[
\begin{array}{c}
\left( \frac{1}{1+q}-\frac{q}{1+q+q^{2}}\right)  \\
-\frac{x-a}{b-a}\left( \frac{1}{1+q}\frac{x-a}{b-a}-\frac{q}{1+q+q^{2}}%
\left( \frac{x-a}{b-a}\right) ^{2}\right)
\end{array}%
\right]   \notag \\
&=&\left[
\begin{array}{c}
\frac{1}{\left( 1+q\right) \left( 1+q+q^{2}\right) }-\frac{1}{1+q}\left(
\frac{x-a}{b-a}\right) ^{2} \\
+\frac{q}{1+q+q^{2}}\left( \frac{x-a}{b-a}\right) ^{3}%
\end{array}%
\right] .  \notag
\end{eqnarray}

Let us introduce some new quantum integral inequalities by the help of
quantum power mean inequality and Lemma \ref{2.6}.

\begin{theorem}
\label{2.8} Let $f:\left[ a,b\right] \rightarrow
\mathbb{R}
$ be an arbitrary function with $%
\begin{array}{c}
_{a}D_{q}f%
\end{array}%
$ is quantum integrable on $\left[ a,b\right] $. If $%
\begin{array}{c}
\left\vert _{a}D_{q}f\right\vert ^{r}%
\end{array}%
$, $r\geq 1$ is a convex function, then the following quantum integral
inequality holds:%
\begin{eqnarray}
&&\left\vert f\left( x\right) -\frac{1}{b-a}\int_{a}^{b}f\left( t\right)
\begin{array}{c}
_{a}d_{q}t%
\end{array}%
\right\vert   \label{2-9} \\
&\leq &\left( b-a\right) \left[
\begin{array}{c}
K_{1}^{1-\frac{1}{r}}\left( a,b,x,q\right) \left[
\begin{array}{c}
\left\vert _{a}D_{q}f\left( a\right) \right\vert ^{r}K_{2}\left(
a,b,x,q\right)  \\
+\left\vert _{a}D_{q}f\left( b\right) \right\vert ^{r}K_{3}\left(
a,b,x,q\right)
\end{array}%
\right] ^{\frac{1}{r}} \\
+K_{4}^{1-\frac{1}{r}}\left( a,b,x,q\right) \left[
\begin{array}{c}
\left\vert _{a}D_{q}f\left( a\right) \right\vert ^{r}K_{5}\left(
a,b,x,q\right)  \\
+\left\vert _{a}D_{q}f\left( b\right) \right\vert ^{r}K_{6}\left(
a,b,x,q\right)
\end{array}%
\right] ^{\frac{1}{r}}%
\end{array}%
\right]   \notag
\end{eqnarray}%
for all $x\in \left[ a,b\right] $, where%
\begin{equation*}
K_{1}\left( a,b,x,q\right) =\int_{0}^{\frac{x-a}{b-a}}qt%
\begin{array}{c}
_{0}d_{q}t%
\end{array}%
=\frac{q}{1+q}\left( \frac{x-a}{b-a}\right) ^{2},
\end{equation*}%
\begin{equation*}
K_{2}\left( a,b,x,q\right) =\int_{0}^{\frac{x-a}{b-a}}qt^{2}%
\begin{array}{c}
_{0}d_{q}t%
\end{array}%
=\frac{q}{1+q+q^{2}}\left( \frac{x-a}{b-a}\right) ^{3},
\end{equation*}%
\begin{equation*}
K_{3}\left( a,b,x,q\right) =\int_{0}^{\frac{x-a}{b-a}}qt-qt^{2}%
\begin{array}{c}
_{0}d_{q}t%
\end{array}%
=K_{1}\left( a,b,x,q\right) -K_{2}\left( a,b,x,q\right) ,
\end{equation*}%
$\allowbreak $%
\begin{equation*}
K_{4}\left( a,b,x,q\right) =\int_{\frac{x-a}{b-a}}^{1}\left( 1-qt\right)
\begin{array}{c}
_{0}d_{q}t%
\end{array}%
=\frac{q}{1+q}\left( \frac{b-x}{b-a}\right) ^{2},
\end{equation*}%
\begin{equation*}
K_{5}\left( a,b,x,q\right) =\int_{\frac{x-a}{b-a}}^{1}\left( t-qt^{2}\right)
\begin{array}{c}
_{0}d_{q}t%
\end{array}%
=\left[
\begin{array}{c}
\frac{1}{\left( 1+q\right) \left( 1+q+q^{2}\right) }-\frac{1}{1+q}\left(
\frac{x-a}{b-a}\right) ^{2} \\
+\frac{q}{1+q+q^{2}}\left( \frac{x-a}{b-a}\right) ^{3}%
\end{array}%
\right] ,
\end{equation*}%
and
\begin{equation*}
K_{6}\left( a,b,x,q\right) =\int_{\frac{x-a}{b-a}}^{1}\left(
1-qt-t+qt^{2}\right)
\begin{array}{c}
_{0}d_{q}t%
\end{array}%
=K_{4}\left( a,b,x,q\right) -K_{5}\left( a,b,x,q\right) .
\end{equation*}
\end{theorem}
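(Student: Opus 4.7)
The plan is to combine the quantum Montgomery identity (Lemma \ref{2.6}) with the quantum analogue of the power-mean inequality and the convexity hypothesis on $\left|{}_aD_qf\right|^r$. First, I would take absolute values in the identity (\ref{2-4}) and, using the piecewise definition of $K_q(t)$ together with the observation that $|qt-1|=1-qt$ for $0<q<1$ and $t\in[0,1]$, split the right-hand side as
\[
\left|f(x)-\frac{1}{b-a}\int_a^b f(t)\,{}_ad_qt\right|\leq (b-a)\bigl(I_1+I_2\bigr),
\]
where $I_1=\int_0^{(x-a)/(b-a)}qt\,\left|{}_aD_qf(tb+(1-t)a)\right|\,{}_0d_qt$ and $I_2=\int_{(x-a)/(b-a)}^{1}(1-qt)\,\left|{}_aD_qf(tb+(1-t)a)\right|\,{}_0d_qt$.

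Second, I would apply the quantum power-mean inequality to each of $I_1$ and $I_2$, taking the non-negative factors $qt$ and $1-qt$ as the weight functions. For $I_1$ this gives
\[
I_1\leq\left(\int_0^{(x-a)/(b-a)}qt\,{}_0d_qt\right)^{1-1/r}\left(\int_0^{(x-a)/(b-a)}qt\,\left|{}_aD_qf(tb+(1-t)a)\right|^r\,{}_0d_qt\right)^{1/r},
\]
and an analogous bound holds for $I_2$. By identities (\ref{2-5}) and (\ref{2-7}), the outer factors are exactly $K_1(a,b,x,q)^{1-1/r}$ and $K_4(a,b,x,q)^{1-1/r}$.

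Third, I would invoke the convexity of $\left|{}_aD_qf\right|^r$, namely $\left|{}_aD_qf(tb+(1-t)a)\right|^r\leq t\left|{}_aD_qf(b)\right|^r+(1-t)\left|{}_aD_qf(a)\right|^r$, to bound each inner integral by a linear combination of $\left|{}_aD_qf(a)\right|^r$ and $\left|{}_aD_qf(b)\right|^r$. The resulting coefficients are the quantum integrals of $qt\cdot t$, $qt(1-t)$, $(1-qt)t$, and $(1-qt)(1-t)$ over the appropriate subintervals. Using the pre-computed identities (\ref{2-5})--(\ref{2-8}) together with the additive relations $K_3=K_1-K_2$ and $K_6=K_4-K_5$, these coefficients match exactly $K_2$, $K_3$, $K_5$, $K_6$ as displayed in (\ref{2-9}), which completes the proof.

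The main obstacle I expect is the bookkeeping of the six quantum integrals and the correct matching of each $K_i$ with its role; in particular, the coefficient of $\left|{}_aD_qf(a)\right|^r$ in the first bracket comes from $\int_0^{(x-a)/(b-a)}qt(1-t)\,{}_0d_qt=K_1-K_2=K_3$, which must be read off from (\ref{2-5})--(\ref{2-6}), and the analogous identity $K_6=K_4-K_5$ is needed for the second bracket via $(1-qt)(1-t)=1-qt-t+qt^2$. Once the power-mean and convexity steps are in place, the remainder reduces to this accounting using the identities already established in the paper.
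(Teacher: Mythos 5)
Your strategy coincides with the paper's own proof: take absolute values in the quantum Montgomery identity of Lemma \ref{2.6}, split the resulting integral at $t=\frac{x-a}{b-a}$ with the nonnegative weights $qt$ and $1-qt$, apply the quantum power-mean inequality to each piece (producing the outer factors $K_{1}^{1-\frac{1}{r}}$ and $K_{4}^{1-\frac{1}{r}}$ via (\ref{2-5}) and (\ref{2-7})), insert the convexity of $\left\vert {}_{a}D_{q}f\right\vert ^{r}$, and finish with the precomputed integrals (\ref{2-5})--(\ref{2-8}). Up to that point the two arguments are identical.

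There is, however, one substantive discrepancy, and it makes your final ``matching'' claim false as written. You use the standard convexity estimate $\left\vert {}_{a}D_{q}f(tb+(1-t)a)\right\vert ^{r}\leq t\left\vert {}_{a}D_{q}f(b)\right\vert ^{r}+(1-t)\left\vert {}_{a}D_{q}f(a)\right\vert ^{r}$, which attaches the weight $t$ to the value at $b$. Carried through, this makes the coefficient of $\left\vert {}_{a}D_{q}f(b)\right\vert ^{r}$ in the first bracket equal to $\int_{0}^{(x-a)/(b-a)}qt\cdot t\ {}_{0}d_{q}t=K_{2}$ and the coefficient of $\left\vert {}_{a}D_{q}f(a)\right\vert ^{r}$ equal to $K_{3}$ --- indeed you say exactly this in your last paragraph. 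But in the displayed inequality (\ref{2-9}) it is $K_{2}$ that multiplies $\left\vert {}_{a}D_{q}f(a)\right\vert ^{r}$ and $K_{3}$ that multiplies $\left\vert {}_{a}D_{q}f(b)\right\vert ^{r}$, and likewise for $K_{5}$, $K_{6}$ in the second bracket. The paper lands on (\ref{2-9}) precisely because its inequality (\ref{2-10}) is written with the weight $t$ on $\left\vert {}_{a}D_{q}f(a)\right\vert ^{r}$, i.e.\ with $a$ and $b$ interchanged relative to the standard convexity estimate for the parametrization $tb+(1-t)a$. So your argument, done consistently, proves (\ref{2-9}) with $\left\vert {}_{a}D_{q}f(a)\right\vert$ and $\left\vert {}_{a}D_{q}f(b)\right\vert$ swapped inside both brackets; to obtain the theorem literally as printed you would have to adopt the paper's form of (\ref{2-10}). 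You need to resolve this mismatch explicitly --- either justify the paper's version of the convexity step or state the corrected form of the conclusion --- rather than asserting that the coefficients ``match exactly'' when, by your own computation, they do not. (The swap is immaterial only in symmetric corollaries where the two coefficients are summed, since $K_{2}+K_{3}=K_{1}$ and $K_{5}+K_{6}=K_{4}$.)
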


\begin{proof}
Using convexity of $\left\vert _{a}D_{q}f\right\vert ^{r}$, we have that
\begin{equation}
\left\vert _{a}D_{q}f\left( tb+\left( 1-t\right) a\right) \right\vert
^{r}\leq t\left\vert _{a}D_{q}f\left( a\right) \right\vert ^{r}+\left(
1-t\right) \left\vert _{a}D_{q}f\left( b\right) \right\vert ^{r}.
\label{2-10}
\end{equation}
By using Lemma \ref{2.6}, quantum power mean inequality and $\left( \ref%
{2-10}\right) $, we have that%
\begin{equation}
\left\vert f\left( x\right) -\frac{1}{b-a}\int_{a}^{b}f\left( t\right)
\begin{array}{c}
_{a}d_{q}t%
\end{array}%
\right\vert  \label{2-11}
\end{equation}%
\begin{equation*}
\leq \left( b-a\right) \int_{0}^{1}\left\vert K_{q}\left( t\right)
\right\vert
\begin{array}{c}
\left\vert _{a}D_{q}f\left( tb+\left( 1-t\right) a\right) \right\vert%
\end{array}%
\begin{array}{c}
_{0}d_{q}t%
\end{array}%
\end{equation*}%
\begin{equation*}
\leq \left( b-a\right) \left[
\begin{array}{c}
\int_{0}^{\frac{x-a}{b-a}}qt%
\begin{array}{c}
\left\vert _{a}D_{q}f\left( tb+\left( 1-t\right) a\right) \right\vert%
\end{array}%
\begin{array}{c}
_{0}d_{q}t%
\end{array}
\\
+\int_{\frac{x-a}{b-a}}^{1}\left( 1-qt\right)
\begin{array}{c}
\left\vert _{a}D_{q}f\left( tb+\left( 1-t\right) a\right) \right\vert%
\end{array}%
\begin{array}{c}
_{0}d_{q}t%
\end{array}%
\end{array}%
\right]
\end{equation*}%
\begin{equation*}
\leq \left( b-a\right) \left[
\begin{array}{c}
\left( \int_{0}^{\frac{x-a}{b-a}}qt%
\begin{array}{c}
_{0}d_{q}t%
\end{array}%
\right) ^{1-\frac{1}{r}} \\
\times \left( \int_{0}^{\frac{x-a}{b-a}}qt%
\begin{array}{c}
\left\vert _{a}D_{q}f\left( tb+\left( 1-t\right) a\right) \right\vert ^{r}%
\end{array}%
\begin{array}{c}
_{0}d_{q}t%
\end{array}%
\right) ^{\frac{1}{r}} \\
+\left( \int_{\frac{x-a}{b-a}}^{1}\left( 1-qt\right)
\begin{array}{c}
_{0}d_{q}t%
\end{array}%
\right) ^{1-\frac{1}{r}} \\
\times \left( \int_{\frac{x-a}{b-a}}^{1}\left( 1-qt\right)
\begin{array}{c}
\left\vert _{a}D_{q}f\left( tb+\left( 1-t\right) a\right) \right\vert ^{r}%
\end{array}%
\begin{array}{c}
_{0}d_{q}t%
\end{array}%
\right) ^{\frac{1}{r}}%
\end{array}%
\right]
\end{equation*}%
\begin{equation*}
\leq \left( b-a\right) \left[
\begin{array}{c}
\left( \int_{0}^{\frac{x-a}{b-a}}qt%
\begin{array}{c}
_{0}d_{q}t%
\end{array}%
\right) ^{1-\frac{1}{r}} \\
\times \left( \int_{0}^{\frac{x-a}{b-a}}qt\left[
\begin{array}{c}
t\left\vert _{a}D_{q}f\left( a\right) \right\vert ^{r} \\
+\left( 1-t\right) \left\vert _{a}D_{q}f\left( b\right) \right\vert ^{r}%
\end{array}%
\right]
\begin{array}{c}
_{0}d_{q}t%
\end{array}%
\right) ^{\frac{1}{r}} \\
+\left( \int_{\frac{x-a}{b-a}}^{1}\left( 1-qt\right)
\begin{array}{c}
_{0}d_{q}t%
\end{array}%
\right) ^{1-\frac{1}{r}} \\
\times \left( \int_{\frac{x-a}{b-a}}^{1}\left( 1-qt\right) \left[
\begin{array}{c}
t\left\vert _{a}D_{q}f\left( a\right) \right\vert ^{r} \\
+\left( 1-t\right) \left\vert _{a}D_{q}f\left( b\right) \right\vert ^{r}%
\end{array}%
\right]
\begin{array}{c}
_{0}d_{q}t%
\end{array}%
\right) ^{\frac{1}{r}}%
\end{array}%
\right]
\end{equation*}%
\begin{equation*}
\leq \left( b-a\right) \left[
\begin{array}{c}
\left( \int_{0}^{\frac{x-a}{b-a}}qt%
\begin{array}{c}
_{0}d_{q}t%
\end{array}%
\right) ^{1-\frac{1}{r}} \\
\times \left(
\begin{array}{c}
\left\vert _{a}D_{q}f\left( a\right) \right\vert ^{r}\int_{0}^{\frac{x-a}{b-a%
}}qt^{2}%
\begin{array}{c}
_{0}d_{q}t%
\end{array}
\\
+\left\vert _{a}D_{q}f\left( b\right) \right\vert ^{r}\int_{0}^{\frac{x-a}{%
b-a}}qt-qt^{2}%
\begin{array}{c}
_{0}d_{q}t%
\end{array}%
\end{array}%
\right) ^{\frac{1}{r}} \\
+\left( \int_{\frac{x-a}{b-a}}^{1}\left( 1-qt\right)
\begin{array}{c}
_{0}d_{q}t%
\end{array}%
\right) ^{1-\frac{1}{r}} \\
\times \left(
\begin{array}{c}
\left\vert _{a}D_{q}f\left( a\right) \right\vert ^{r}\int_{\frac{x-a}{b-a}%
}^{1}\left( t-qt^{2}\right)
\begin{array}{c}
_{0}d_{q}t%
\end{array}
\\
+\left\vert _{a}D_{q}f\left( b\right) \right\vert ^{r}\int_{\frac{x-a}{b-a}%
}^{1}\left( 1-qt-t+qt^{2}\right)
\begin{array}{c}
_{0}d_{q}t%
\end{array}%
\end{array}%
\right) ^{\frac{1}{r}}%
\end{array}%
\right]
\end{equation*}
Using $\left( \ref{2-5}\right) $-$\left( \ref{2-8}\right) $ in $\left( \ref%
{2-11}\right) $, we obtain the desired result in $\left( \ref{2-9}\right) $.
This ends the proof.
\end{proof}

\begin{corollary}
\label{2.9}In Theorem \ref{2.8}, the following inequalities are held by the
following assumptions:

\begin{enumerate}
\item $r=1$;%
\begin{eqnarray*}
&&\left\vert f\left( x\right) -\frac{1}{b-a}\int_{a}^{b}f\left( t\right)
\begin{array}{c}
_{a}d_{q}t%
\end{array}%
\right\vert \\
&\leq &\left( b-a\right) \left[
\begin{array}{c}
\left[ \left\vert _{a}D_{q}f\left( a\right) \right\vert K_{2}\left(
a,b,x,q\right) +\left\vert _{a}D_{q}f\left( b\right) \right\vert K_{3}\left(
a,b,x,q\right) \right] \\
\left[ \left\vert _{a}D_{q}f\left( a\right) \right\vert K_{5}\left(
a,b,x,q\right) +\left\vert _{a}D_{q}f\left( b\right) \right\vert K_{6}\left(
a,b,x,q\right) \right]%
\end{array}%
\right] ,
\end{eqnarray*}

\item $r=1$ and $\left\vert _{a}D_{q}f\left( x\right) \right\vert <M$ \ for
all $x\in\left[ a,b\right] $ (a quantum Ostrowski type inequality, see \cite[%
Theorem 3.1]{NAN16});%
\begin{eqnarray*}
&&\left\vert f\left( x\right) -\frac{1}{b-a}\int_{a}^{b}f\left( t\right)
\begin{array}{c}
_{a}d_{q}t%
\end{array}%
\right\vert \\
&\leq &M\left( b-a\right) \left[ K_{2}\left( a,b,x,q\right) +K_{3}\left(
a,b,x,q\right) +K_{5}\left( a,b,x,q\right) +K_{6}\left( a,b,x,q\right) %
\right] \\
&\leq &M\left( b-a\right) \left[ K_{1}\left( a,b,x,q\right) +K_{4}\left(
a,b,x,q\right) \right] \\
&\leq &M\left( b-a\right) \left[ \frac{q}{1+q}\left( \frac{x-a}{b-a}\right)
^{2}+\frac{q}{1+q}\left( \frac{b-x}{b-a}\right) ^{2}\right] \\
&\leq &\frac{qM}{b-a}\left[ \frac{\left( x-a\right) ^{2}+\left( b-x\right)
^{2}}{1+q}\right] ,
\end{eqnarray*}

\item $r=1$, $\left\vert _{a}D_{q}f\left( x\right) \right\vert <M$ \ for all
$x\in \left[ a,b\right] $ and $q\rightarrow 1^{-}$ (Ostrowski inequality $%
\left( \ref{1-1}\right) $);

\item $r=1$ and $x=\frac{qa+b}{1+q}$ (a new quantum midpoint type
inequality);%
\begin{eqnarray*}
&&\left\vert f\left( \frac{qa+b}{1+q}\right) -\frac{1}{b-a}%
\int_{a}^{b}f\left( t\right)
\begin{array}{c}
_{a}d_{q}t%
\end{array}%
\right\vert \\
&\leq &\left( b-a\right) \left[
\begin{array}{c}
\left[ \left\vert _{a}D_{q}f\left( a\right) \right\vert K_{2}\left( a,b,%
\frac{qa+b}{1+q},q\right) +\left\vert _{a}D_{q}f\left( b\right) \right\vert
K_{3}\left( a,b,\frac{qa+b}{1+q},q\right) \right] \\
\left[ \left\vert _{a}D_{q}f\left( a\right) \right\vert K_{5}\left( a,b,%
\frac{qa+b}{1+q},q\right) +\left\vert _{a}D_{q}f\left( b\right) \right\vert
K_{6}\left( a,b,\frac{qa+b}{1+q},q\right) \right]%
\end{array}%
\right] \\
&\leq &\left( b-a\right) \left[
\begin{array}{c}
\left[ \left\vert _{a}D_{q}f\left( a\right) \right\vert \frac{q}{\left(
1+q\right) ^{3}\left( 1+q+q^{2}\right) }+\left\vert _{a}D_{q}f\left(
b\right) \right\vert \frac{q^{2}+q^{3}}{\left( 1+q\right) ^{3}\left(
1+q+q^{2}\right) }\right] \\
\left[ \left\vert _{a}D_{q}f\left( a\right) \right\vert \frac{2q}{\left(
1+q\right) ^{3}\left( 1+q+q^{2}\right) }+\left\vert _{a}D_{q}f\left(
b\right) \right\vert \frac{-2q+q^{3}+q^{4}+q^{5}}{\left( 1+q\right)
^{3}\left( 1+q+q^{2}\right) }\right]%
\end{array}%
\right] \\
&\leq &\left( b-a\right) \left[
\begin{array}{c}
\left\vert _{a}D_{q}f\left( a\right) \right\vert \frac{3q}{\left( 1+q\right)
^{3}\left( 1+q+q^{2}\right) } \\
+\left\vert _{a}D_{q}f\left( b\right) \right\vert \frac{%
-2q+q^{2}+2q^{3}+q^{4}+q^{5}}{\left( 1+q\right) ^{3}\left( 1+q+q^{2}\right) }%
\end{array}%
\right] ,
\end{eqnarray*}

\item $r=1$, $x=\frac{qa+b}{1+q}$ and $q\rightarrow 1^{-}$ (a midpoint type
inequality, see \cite[Theorem 2.2]{K04});%
\begin{equation*}
\left\vert f\left( \frac{a+b}{2}\right) -\frac{1}{b-a}\int_{a}^{b}f\left(
t\right) dt\right\vert \leq \frac{\left( b-a\right) \left[ \left\vert
f^{\prime }\left( a\right) \right\vert +\left\vert f^{\prime }\left(
b\right) \right\vert \right] }{8},
\end{equation*}

\item $r=1$ and $x=\frac{a+b}{2}$ (a new quantum midpoint type inequality);%
\begin{eqnarray*}
&&\left\vert f\left( \frac{a+b}{2}\right) -\frac{1}{b-a}\int_{a}^{b}f\left(
t\right)
\begin{array}{c}
_{a}d_{q}t%
\end{array}%
\right\vert \\
&\leq &\left( b-a\right) \left[
\begin{array}{c}
\left[ \left\vert _{a}D_{q}f\left( a\right) \right\vert K_{2}\left( a,b,%
\frac{a+b}{2},q\right) +\left\vert _{a}D_{q}f\left( b\right) \right\vert
K_{3}\left( a,b,\frac{a+b}{2},q\right) \right] \\
\left[ \left\vert _{a}D_{q}f\left( a\right) \right\vert K_{5}\left( a,b,%
\frac{a+b}{2},q\right) +\left\vert _{a}D_{q}f\left( b\right) \right\vert
K_{6}\left( a,b,\frac{a+b}{2},q\right) \right]%
\end{array}%
\right] \\
&\leq &\left( b-a\right) \left[
\begin{array}{c}
\left[ \left\vert _{a}D_{q}f\left( a\right) \right\vert \frac{q}{8\left(
1+q+q^{2}\right) }+\left\vert _{a}D_{q}f\left( b\right) \right\vert \frac{%
q+q^{2}+2q^{3}}{8\left( 1+q\right) \left( 1+q+q^{2}\right) }\right] \\
\left[ \left\vert _{a}D_{q}f\left( a\right) \right\vert \frac{6-q-q^{2}}{%
8\left( 1+q\right) \left( 1+q+q^{2}\right) }+\left\vert _{a}D_{q}f\left(
b\right) \right\vert \frac{3q+3q^{2}+2q^{3}-6}{8\left( 1+q\right) \left(
1+q+q^{2}\right) }\right]%
\end{array}%
\right] \\
&\leq &\left( b-a\right) \left[
\begin{array}{c}
\left\vert _{a}D_{q}f\left( a\right) \right\vert \frac{6}{8\left( 1+q\right)
\left( 1+q+q^{2}\right) } \\
+\left\vert _{a}D_{q}f\left( b\right) \right\vert \frac{4q+4q^{2}+4q^{3}-6}{%
8\left( 1+q\right) \left( 1+q+q^{2}\right) }%
\end{array}%
\right] ,
\end{eqnarray*}

\item $\left\vert _{a}D_{q}f\left( x\right) \right\vert <M$ \ for all $x\in %
\left[ a,b\right] $ (a quantum Ostrowski type inequality, see \cite[Theorem
3.1]{NAN16});%
\begin{eqnarray*}
&&\left\vert f\left( x\right) -\frac{1}{b-a}\int_{a}^{b}f\left( t\right)
\begin{array}{c}
_{a}d_{q}t%
\end{array}%
\right\vert  \\
&\leq &\left( b-a\right) M\left[
\begin{array}{c}
K_{1}^{1-\frac{1}{r}}\left( a,b,x,q\right) \left[ K_{2}\left( a,b,x,q\right)
+K_{3}\left( a,b,x,q\right) \right] ^{\frac{1}{r}} \\
+K_{4}^{1-\frac{1}{r}}\left( a,b,x,q\right) \left[ K_{5}\left(
a,b,x,q\right) +K_{6}\left( a,b,x,q\right) \right] ^{\frac{1}{r}}%
\end{array}%
\right]  \\
&\leq &\left( b-a\right) M\left[
\begin{array}{c}
K_{1}^{1-\frac{1}{r}}\left( a,b,x,q\right) K_{1}^{\frac{1}{r}}\left(
a,b,x,q\right)  \\
+K_{4}^{1-\frac{1}{r}}\left( a,b,x,q\right) K_{4}^{\frac{1}{r}}\left(
a,b,x,q\right)
\end{array}%
\right]  \\
&\leq &\left( b-a\right) M\left[ K_{1}\left( a,b,x,q\right) +K_{4}\left(
a,b,x,q\right) \right]  \\
&\leq &M\left( b-a\right) \left[ \frac{q}{1+q}\left( \frac{x-a}{b-a}\right)
^{2}+\frac{q}{1+q}\left( \frac{b-x}{b-a}\right) ^{2}\right]  \\
&\leq &\frac{qM}{b-a}\left[ \frac{\left( x-a\right) ^{2}+\left( b-x\right)
^{2}}{1+q}\right] ,
\end{eqnarray*}

\item $x=\frac{qa+b}{1+q}$ (a new quantum midpoint type inequality);%
\begin{eqnarray*}
&&\left\vert f\left( \frac{qa+b}{1+q}\right) -\frac{1}{b-a}%
\int_{a}^{b}f\left( t\right)
\begin{array}{c}
_{a}d_{q}t%
\end{array}%
\right\vert  \\
&\leq &\left( b-a\right) \left[
\begin{array}{c}
K_{1}^{1-\frac{1}{r}}\left( a,b,\frac{qa+b}{1+q},q\right) \left[
\begin{array}{c}
\left\vert _{a}D_{q}f\left( a\right) \right\vert ^{r}K_{2}\left( a,b,\frac{%
qa+b}{1+q},q\right)  \\
+\left\vert _{a}D_{q}f\left( b\right) \right\vert ^{r}K_{3}\left( a,b,\frac{%
qa+b}{1+q},q\right)
\end{array}%
\right] ^{\frac{1}{r}} \\
+K_{4}^{1-\frac{1}{r}}\left( a,b,\frac{qa+b}{1+q},q\right) \left[
\begin{array}{c}
\left\vert _{a}D_{q}f\left( a\right) \right\vert ^{r}K_{5}\left( a,b,\frac{%
qa+b}{1+q},q\right)  \\
+\left\vert _{a}D_{q}f\left( b\right) \right\vert ^{r}K_{6}\left( a,b,\frac{%
qa+b}{1+q},q\right)
\end{array}%
\right] ^{\frac{1}{r}}%
\end{array}%
\right]  \\
&\leq &\left( b-a\right) \left[
\begin{array}{c}
\left[ \frac{q}{\left( 1+q\right) ^{3}}\right] ^{1-\frac{1}{r}}\left[
\begin{array}{c}
\left\vert _{a}D_{q}f\left( a\right) \right\vert ^{r}\frac{q}{\left(
1+q\right) ^{3}\left( 1+q+q^{2}\right) } \\
+\left\vert _{a}D_{q}f\left( b\right) \right\vert ^{r}\frac{q^{2}+q^{3}}{%
\left( 1+q\right) ^{3}\left( 1+q+q^{2}\right) }%
\end{array}%
\right] ^{\frac{1}{r}} \\
+\left[ \frac{q^{3}}{\left( 1+q\right) ^{3}}\right] ^{1-\frac{1}{r}}\left[
\begin{array}{c}
\left\vert _{a}D_{q}f\left( a\right) \right\vert ^{r}\frac{2q}{\left(
1+q\right) ^{3}\left( 1+q+q^{2}\right) } \\
+\left\vert _{a}D_{q}f\left( b\right) \right\vert ^{r}\frac{%
-2q+q^{3}+q^{4}+q^{5}}{\left( 1+q\right) ^{3}\left( 1+q+q^{2}\right) }%
\end{array}%
\right] ^{\frac{1}{r}}%
\end{array}%
\right] ,
\end{eqnarray*}

\item $x=\frac{qa+b}{1+q}$ and $q\rightarrow 1^{-}$ (a midpoint type
inequality, see \cite[Corollary 17]{ASKI16});%
\begin{eqnarray*}
&&\left\vert f\left( \frac{a+b}{2}\right) -\frac{1}{b-a}\int_{a}^{b}f\left(
t\right) dt\right\vert  \\
&\leq&\left( b-a\right) \frac{1}{2^{3-\frac{3}{r}}}\left[
\begin{array}{c}
\Big( \left\vert f^{\prime
}\left( a\right) \right\vert ^{r}\frac{1}{24}+\left\vert f^{\prime }\left(
b\right) \right\vert ^{r}\frac{1}{12}\Big) ^{\frac{1}{r}} \\
+ \Big( \left\vert f^{\prime
}\left( a\right) \right\vert ^{r}\frac{1}{12}+\left\vert f^{\prime }\left(
b\right) \right\vert ^{r}\frac{1}{24}\Big) ^{\frac{1}{r}}%
\end{array}%
\right],
\end{eqnarray*}

\item $x=\frac{a+b}{2}$ (a new quantum midpoint type inequality);%
\begin{eqnarray*}
&&\left\vert f\left( \frac{a+b}{2}\right) -\frac{1}{b-a}\int_{a}^{b}f\left(
t\right)
\begin{array}{c}
_{a}d_{q}t%
\end{array}%
\right\vert  \\
&\leq &\left( b-a\right) \left[
\begin{array}{c}
K_{1}^{1-\frac{1}{r}}\left( a,b,\frac{a+b}{2},q\right) \left[
\begin{array}{c}
\left\vert _{a}D_{q}f\left( a\right) \right\vert ^{r}K_{2}\left( a,b,\frac{%
a+b}{2},q\right)  \\
+\left\vert _{a}D_{q}f\left( b\right) \right\vert ^{r}K_{3}\left( a,b,\frac{%
a+b}{2},q\right)
\end{array}%
\right] ^{\frac{1}{r}} \\
+K_{4}^{1-\frac{1}{r}}\left( a,b,\frac{a+b}{2},q\right) \left[
\begin{array}{c}
\left\vert _{a}D_{q}f\left( a\right) \right\vert ^{r}K_{5}\left( a,b,\frac{%
a+b}{2},q\right)  \\
+\left\vert _{a}D_{q}f\left( b\right) \right\vert ^{r}K_{6}\left( a,b,\frac{%
a+b}{2},q\right)
\end{array}%
\right] ^{\frac{1}{r}}%
\end{array}%
\right]  \\
&\leq &\left( b-a\right) \left( \frac{q}{4\left( 1+q\right) }\right) ^{1-%
\frac{1}{r}}\left[
\begin{array}{c}
\left[
\begin{array}{c}
\left\vert _{a}D_{q}f\left( a\right) \right\vert ^{r}\frac{q}{8\left(
1+q+q^{2}\right) } \\
+\left\vert _{a}D_{q}f\left( b\right) \right\vert ^{r}\frac{q+q^{2}+2q^{3}}{%
8\left( 1+q\right) \left( 1+q+q^{2}\right) }%
\end{array}%
\right] ^{\frac{1}{r}} \\
+\left[
\begin{array}{c}
\left\vert _{a}D_{q}f\left( a\right) \right\vert ^{r}\frac{6-q-q^{2}}{%
8\left( 1+q\right) \left( 1+q+q^{2}\right) } \\
+\left\vert _{a}D_{q}f\left( b\right) \right\vert ^{r}\frac{%
3q+3q^{2}+2q^{3}-6}{8\left( 1+q\right) \left( 1+q+q^{2}\right) }%
\end{array}%
\right] ^{\frac{1}{r}}%
\end{array}%
\right] ,
\end{eqnarray*}

\item $x=\frac{a+qb}{1+q}$ (a new quantum midpoint type inequality);%
\begin{eqnarray*}
&&\left\vert f\left( \frac{a+qb}{1+q}\right) -\frac{1}{b-a}%
\int_{a}^{b}f\left( t\right)
\begin{array}{c}
_{a}d_{q}t%
\end{array}%
\right\vert  \\
&\leq &\left( b-a\right) \left[
\begin{array}{c}
K_{1}^{1-\frac{1}{r}}\left( a,b,\frac{a+qb}{1+q},q\right) \left[
\begin{array}{c}
\left\vert _{a}D_{q}f\left( a\right) \right\vert ^{r}K_{2}\left( a,b,\frac{%
a+qb}{1+q},q\right)  \\
+\left\vert _{a}D_{q}f\left( b\right) \right\vert ^{r}K_{3}\left( a,b,\frac{%
a+qb}{1+q},q\right)
\end{array}%
\right] ^{\frac{1}{r}} \\
+K_{4}^{1-\frac{1}{r}}\left( a,b,\frac{a+qb}{1+q},q\right) \left[
\begin{array}{c}
\left\vert _{a}D_{q}f\left( a\right) \right\vert ^{r}K_{5}\left( a,b,\frac{%
a+qb}{1+q},q\right)  \\
+\left\vert _{a}D_{q}f\left( b\right) \right\vert ^{r}K_{6}\left( a,b,\frac{%
a+qb}{1+q},q\right)
\end{array}%
\right] ^{\frac{1}{r}}%
\end{array}%
\right]  \\
&\leq &\left( b-a\right) \left[
\begin{array}{c}
\left[ \frac{q^{3}}{\left( 1+q\right) ^{3}}\right] ^{1-\frac{1}{r}}\left[
\begin{array}{c}
\left\vert _{a}D_{q}f\left( a\right) \right\vert ^{r}\frac{q^{4}}{\left(
1+q\right) ^{3}\left( 1+q+q^{2}\right) } \\
+\left\vert _{a}D_{q}f\left( b\right) \right\vert ^{r}\frac{q^{3}+q^{5}}{%
\left( 1+q\right) ^{3}\left( 1+q+q^{2}\right) }%
\end{array}%
\right] ^{\frac{1}{r}} \\
+\left[ \frac{q}{\left( 1+q\right) ^{3}}\right] ^{1-\frac{1}{r}}\left[
\begin{array}{c}
\left\vert _{a}D_{q}f\left( a\right) \right\vert ^{r}\frac{1+2q-q^{3}}{%
\left( 1+q\right) ^{3}\left( 1+q+q^{2}\right) } \\
+\left\vert _{a}D_{q}f\left( b\right) \right\vert ^{r}\frac{-1-q+q^{2}+2q^{3}%
}{\left( 1+q\right) ^{3}\left( 1+q+q^{2}\right) }%
\end{array}%
\right] ^{\frac{1}{r}}%
\end{array}%
\right] .
\end{eqnarray*}
\end{enumerate}
\end{corollary}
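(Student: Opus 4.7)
The corollary records eleven immediate consequences of Theorem~\ref{2.8}, so my plan is to apply the master inequality (\ref{2-9}) uniformly and let the parameters do the work. Specifically, each item is obtained by some combination of four operations: (i)~setting $r=1$; (ii)~evaluating the kernels $K_{1},\ldots,K_{6}$ at a specific $x$; (iii)~invoking a uniform bound $\vert{}_{a}D_{q}f\vert<M$; and (iv)~passing to the limit $q\to 1^{-}$ using Lemmas~\ref{2.2} and~\ref{2.5}. Nothing genuinely new has to be proved---each item is the output of a structured substitution into (\ref{2-9}).

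For the six items that take $r=1$ (namely (1)--(6)), I would first note that the power-mean factors $K_{j}^{1-1/r}$ collapse to $1$, so (\ref{2-9}) simplifies to
\[
(b-a)\Bigl\{\vert{}_{a}D_{q}f(a)\vert(K_{2}+K_{5})+\vert{}_{a}D_{q}f(b)\vert(K_{3}+K_{6})\Bigr\},
\]
which is exactly the content of item~(1). For the three items (2), (3), (7) that impose $\vert{}_{a}D_{q}f\vert<M$, I would bound the inner $q$-integrals of (\ref{2-9}) by $M^{r}K_{1}$ and $M^{r}K_{4}$ respectively, using the evident identities $K_{2}+K_{3}=K_{1}$ and $K_{5}+K_{6}=K_{4}$; then $K_{j}^{1-1/r}\cdot(M^{r}K_{j})^{1/r}=MK_{j}$ and the right-hand side collapses uniformly to $M(b-a)(K_{1}+K_{4})$. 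Substituting the closed forms from (\ref{2-5}) and (\ref{2-7}) yields the displayed quantum Ostrowski bound, and letting $q\to 1^{-}$ then reduces it to the classical Ostrowski inequality (\ref{1-1}), which is item~(3).

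For items (4), (6), (8), (10), (11) the essential step is the explicit evaluation of $K_{1},\ldots,K_{6}$ at
\[
\frac{x-a}{b-a}\in\Bigl\{\tfrac{1}{1+q},\ \tfrac{1}{2},\ \tfrac{q}{1+q}\Bigr\},
\]
corresponding to $x=\tfrac{qa+b}{1+q}$, $\tfrac{a+b}{2}$, $\tfrac{a+qb}{1+q}$ respectively. Starting from formulas (\ref{2-5})--(\ref{2-8}) this is pure $q$-arithmetic; for instance at $x=\tfrac{qa+b}{1+q}$ one reads off $K_{1}=\tfrac{q}{(1+q)^{3}}$, $K_{4}=\tfrac{q^{3}}{(1+q)^{3}}$, $K_{2}=\tfrac{q}{(1+q)^{3}(1+q+q^{2})}$, $K_{5}=\tfrac{2q}{(1+q)^{3}(1+q+q^{2})}$, and then recovers $K_{3}$ and $K_{6}$ from the linear identities above. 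Substituting these values into (\ref{2-9}) with the appropriate exponent $r$ reproduces each stated bound.

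Finally, for the classical-limit items (5) and (9) I would take $q\to 1^{-}$ in the $q$-arithmetic just obtained: at $x=\tfrac{a+b}{2}$ the kernels collapse to $K_{1}=K_{4}=\tfrac{1}{8}$, $K_{2}=K_{6}=\tfrac{1}{24}$, $K_{3}=K_{5}=\tfrac{1}{12}$, and Lemmas~\ref{2.2} and~\ref{2.5} transfer the quantum derivative and the quantum integral to their classical counterparts, reproducing the midpoint inequalities of \cite{K04} and \cite{ASKI16}. The only genuine obstacle throughout is the bookkeeping in items~(4) and~(6), where intermediate polynomial sums in $q$ must be collected into the compact numerators displayed; this is tedious but elementary and introduces no idea beyond (\ref{2-5})--(\ref{2-8}).
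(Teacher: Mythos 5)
Your proposal is correct and follows essentially the same route as the paper, which supplies no separate proof for the corollary: each item is obtained exactly as you describe, by substituting $r=1$, the uniform bound $M$ together with the identities $K_{2}+K_{3}=K_{1}$ and $K_{5}+K_{6}=K_{4}$, the special values $\frac{x-a}{b-a}\in\{\frac{1}{1+q},\frac{1}{2},\frac{q}{1+q}\}$, and the limit $q\rightarrow 1^{-}$ via Lemmas \ref{2.2} and \ref{2.5} into the inequality $\left(\ref{2-9}\right)$. Your sample kernel evaluations (e.g. $K_{5}=\frac{2q}{(1+q)^{3}(1+q+q^{2})}$ at $x=\frac{qa+b}{1+q}$, and the limits $K_{2}=K_{6}=\frac{1}{24}$, $K_{3}=K_{5}=\frac{1}{12}$ at the midpoint) agree with the displayed formulas.
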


Finally, we give the following calculated quantum definite integrals used as
the next Theorem 4.
\begin{eqnarray}
\int_{0}^{\frac{x-a}{b-a}}t%
\begin{array}{c}
_{0}d_{q}t%
\end{array}
&=&\left( 1-q\right) \frac{x-a}{b-a}\sum_{n=0}^{\infty }q^{n}\left( q^{n}%
\frac{x-a}{b-a}\right)  \label{2-12} \\
&=&\left( 1-q\right) \left( \frac{x-a}{b-a}\right) ^{2}\frac{1}{1-q^{2}}
\notag \\
&=&\frac{1}{1+q}\left( \frac{x-a}{b-a}\right) ^{2},  \notag
\end{eqnarray}%
\begin{eqnarray}
\int_{0}^{\frac{x-a}{b-a}}\left( 1-t\right)
\begin{array}{c}
_{0}d_{q}t%
\end{array}
&=&\left( 1-q\right) \frac{x-a}{b-a}\sum_{n=0}^{\infty }q^{n}\left( 1-q^{n}%
\frac{x-a}{b-a}\right)  \label{2-13} \\
&=&\left( 1-q\right) \frac{x-a}{b-a}\left( \frac{1}{1-q}-\left( \frac{x-a}{%
b-a}\right) \frac{1}{1-q^{2}}\right)  \notag \\
&=&\frac{x-a}{b-a}\left( 1-\frac{1}{1+q}\left( \frac{x-a}{b-a}\right) \right)
\notag \\
&=&\frac{x-a}{b-a}-\frac{1}{1+q}\left( \frac{x-a}{b-a}\right) ^{2},  \notag
\end{eqnarray}%
\begin{eqnarray}
\int_{\frac{x-a}{b-a}}^{1}t%
\begin{array}{c}
_{0}d_{q}t%
\end{array}
&=&\int_{0}^{1}t%
\begin{array}{c}
_{0}d_{q}t%
\end{array}%
-\int_{0}^{\frac{x-a}{b-a}}t%
\begin{array}{c}
_{0}d_{q}t%
\end{array}
\label{2-14} \\
&=&\frac{1}{1+q}-\frac{1}{1+q}\left( \frac{x-a}{b-a}\right) ^{2}  \notag \\
&=&\frac{1}{1+q}\left( 1-\left( \frac{x-a}{b-a}\right) ^{2}\right) ,  \notag
\end{eqnarray}%
and
\begin{eqnarray}
\int_{\frac{x-a}{b-a}}^{1}\left( 1-t\right)
\begin{array}{c}
_{0}d_{q}t%
\end{array}
&=&\int_{0}^{1}\left( 1-t\right)
\begin{array}{c}
_{0}d_{q}t%
\end{array}%
-\int_{0}^{\frac{x-a}{b-a}}\left( 1-t\right)
\begin{array}{c}
_{0}d_{q}t%
\end{array}
\label{2-15} \\
&=&\frac{q}{1+q}-\frac{x-a}{b-a}+\frac{1}{1+q}\left( \frac{x-a}{b-a}\right)
^{2}.  \notag
\end{eqnarray}

\begin{theorem}
\label{2.10} Let $f:\left[ a,b\right] \rightarrow
\mathbb{R}
$ be an arbitrary function with $%
\begin{array}{c}
_{a}D_{q}f%
\end{array}%
$ is quantum integrable on $\left[ a,b\right] $. If $%
\begin{array}{c}
\left\vert _{a}D_{q}f\right\vert ^{r},%
\end{array}%
$ $r>1$ and $\frac{1}{r}+\frac{1}{p}=1$ is a convex function, then the
following quantum integral inequality holds:%
\begin{equation}
\left\vert f\left( x\right) -\frac{1}{b-a}\int_{a}^{b}f\left( t\right)
\begin{array}{c}
_{a}d_{q}t%
\end{array}%
\right\vert  \label{2-16}
\end{equation}%
\begin{equation*}
\leq \left( b-a\right) \left[
\begin{array}{c}
\left( \int_{0}^{\frac{x-a}{b-a}}qt%
\begin{array}{c}
_{0}d_{q}t%
\end{array}%
\right) ^{\frac{1}{p}} \\
\times \left(
\begin{array}{c}
\left\vert _{a}D_{q}f\left( a\right) \right\vert ^{r}\left[ \frac{1}{1+q}%
\left( \frac{x-a}{b-a}\right) ^{2}\right] \\
+\left\vert _{a}D_{q}f\left( b\right) \right\vert ^{r}\left[ \frac{x-a}{b-a}-%
\frac{1}{1+q}\left( \frac{x-a}{b-a}\right) ^{2}\right]%
\end{array}%
\right) ^{\frac{1}{r}} \\
+\left( \int_{\frac{x-a}{b-a}}^{1}\left( 1-qt\right) ^{p}%
\begin{array}{c}
_{0}d_{q}t%
\end{array}%
\right) ^{\frac{1}{p}} \\
\times \left(
\begin{array}{c}
\left\vert _{a}D_{q}f\left( a\right) \right\vert ^{r}\left[ \frac{1}{1+q}%
\left( 1-\left( \frac{x-a}{b-a}\right) ^{2}\right) \right] \\
+\left\vert _{a}D_{q}f\left( b\right) \right\vert ^{r}\left[ \frac{q}{1+q}-%
\frac{x-a}{b-a}+\frac{1}{1+q}\left( \frac{x-a}{b-a}\right) ^{2}\right]%
\end{array}%
\right) ^{\frac{1}{r}}%
\end{array}%
\right]
\end{equation*}%
for all $x\in \left[ a,b\right] $.
\end{theorem}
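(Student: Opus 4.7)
The plan is to parallel the proof of Theorem \ref{2.8}, but replace the quantum power mean inequality by a quantum Hölder inequality so that the kernel factors $qt$ and $1-qt$ get raised to the power $p$ while the derivative modulus gets raised to the power $r$. First I would start from Lemma \ref{2.6}, take absolute values, and split the resulting integral at $\frac{x-a}{b-a}$ using the piecewise definition of $K_q(t)$, obtaining the bound
\begin{equation*}
\left( b-a\right) \left[ \int_{0}^{\frac{x-a}{b-a}} qt \, \bigl| {_{a}D_{q}f(tb+(1-t)a)}\bigr| \, {_{0}d_{q}t} + \int_{\frac{x-a}{b-a}}^{1} (1-qt)\, \bigl|{_{a}D_{q}f(tb+(1-t)a)}\bigr| \, {_{0}d_{q}t} \right].
\end{equation*}

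Next I would apply the quantum Hölder inequality to each of the two integrals, pulling out $\bigl(\int qt \, {_{0}d_{q}t}\bigr)^{1/p}$ from the first and $\bigl(\int (1-qt)^{p}\, {_{0}d_{q}t}\bigr)^{1/p}$ from the second, leaving factors of the form $\bigl(\int |{_aD_qf(tb+(1-t)a)}|^r \, {_0d_qt}\bigr)^{1/r}$. Using convexity of $|{_aD_qf}|^r$ through the standard bound
\begin{equation*}
\bigl|{_{a}D_{q}f(tb+(1-t)a)}\bigr|^{r} \leq t\,\bigl|{_{a}D_{q}f(a)}\bigr|^{r} + (1-t)\,\bigl|{_{a}D_{q}f(b)}\bigr|^{r},
\end{equation*}
I would replace each inner integrand accordingly, producing four quantum integrals: $\int_{0}^{(x-a)/(b-a)} t \, {_{0}d_{q}t}$, $\int_{0}^{(x-a)/(b-a)} (1-t) \, {_{0}d_{q}t}$, $\int_{(x-a)/(b-a)}^{1} t \, {_{0}d_{q}t}$, and $\int_{(x-a)/(b-a)}^{1} (1-t) \, {_{0}d_{q}t}$.

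At this stage I would substitute the explicit values of these four integrals using identities $(\ref{2-12})$--$(\ref{2-15})$, which have been pre-computed immediately before the statement precisely for this purpose. The two outer factors $\bigl(\int_{0}^{(x-a)/(b-a)} qt \, {_{0}d_{q}t}\bigr)^{1/p}$ and $\bigl(\int_{(x-a)/(b-a)}^{1} (1-qt)^{p} \, {_{0}d_{q}t}\bigr)^{1/p}$ are intentionally left unevaluated in $(\ref{2-16})$, so no closed form is needed for them here; the first can optionally be written using $(\ref{2-5})$ as $(\frac{q}{1+q})^{1/p}(\frac{x-a}{b-a})^{2/p}$.

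The only genuinely delicate point is justifying the quantum Hölder inequality in the form needed: specifically, that for $\frac{1}{r}+\frac{1}{p}=1$ and nonnegative quantum-integrable $g,h$ on $[0,1]$,
\begin{equation*}
\int_{0}^{\alpha} g(t)h(t) \, {_{0}d_{q}t} \leq \left(\int_{0}^{\alpha} g(t)^{p} \, {_{0}d_{q}t}\right)^{1/p}\left(\int_{0}^{\alpha} h(t)^{r} \, {_{0}d_{q}t}\right)^{1/r},
\end{equation*}
and analogously on $[\alpha,1]$; this, however, follows directly from classical Hölder applied to the discrete series $(\ref{1-8})$ defining $\int_{0}^{\alpha}\cdot\, {_{0}d_{q}t}$, so it is routine rather than a real obstacle. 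Assembling the two bounded pieces then yields exactly $(\ref{2-16})$.
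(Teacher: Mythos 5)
Your proposal follows the paper's proof essentially step for step: Lemma \ref{2.6}, splitting the kernel at $\frac{x-a}{b-a}$, the quantum H\"older inequality, the convexity bound on $\left\vert {}_{a}D_{q}f\right\vert ^{r}$, and substitution of the precomputed integrals $(\ref{2-12})$--$(\ref{2-15})$. The only elision is that H\"older literally yields $\bigl(\int_{0}^{(x-a)/(b-a)}\left( qt\right) ^{p}\,{}_{0}d_{q}t\bigr)^{1/p}$ for the first piece, and passing to $\bigl(\int_{0}^{(x-a)/(b-a)}qt\,{}_{0}d_{q}t\bigr)^{1/p}$ requires the extra observation that $\left( qt\right) ^{p}\leq qt$ when $0\leq qt\leq 1$ and $p>1$ --- a step the paper also performs without comment.
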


\begin{proof}
By using Lemma \ref{2.6}, quantum H\"{o}lder inequality and $\left( \ref{2-6}%
\right) $, we have that%
\begin{equation}
\left\vert f\left( x\right) -\frac{1}{b-a}\int_{a}^{b}f\left( t\right)
\begin{array}{c}
_{a}d_{q}t%
\end{array}%
\right\vert  \label{2-17}
\end{equation}%
\begin{equation*}
\leq \left( b-a\right) \int_{0}^{1}\left\vert K_{q}\left( t\right)
\right\vert
\begin{array}{c}
\left\vert _{a}D_{q}f\left( tb+\left( 1-t\right) a\right) \right\vert%
\end{array}%
\begin{array}{c}
_{0}d_{q}t%
\end{array}%
\end{equation*}%
\begin{equation*}
\leq \left( b-a\right) \left[
\begin{array}{c}
\int_{0}^{\frac{x-a}{b-a}}qt%
\begin{array}{c}
\left\vert _{a}D_{q}f\left( tb+\left( 1-t\right) a\right) \right\vert%
\end{array}%
\begin{array}{c}
_{0}d_{q}t%
\end{array}
\\
+\int_{\frac{x-a}{b-a}}^{1}\left( 1-qt\right)
\begin{array}{c}
\left\vert _{a}D_{q}f\left( tb+\left( 1-t\right) a\right) \right\vert%
\end{array}%
\begin{array}{c}
_{0}d_{q}t%
\end{array}%
\end{array}%
\right]
\end{equation*}%
\begin{equation*}
\leq \left( b-a\right) \left[
\begin{array}{c}
\left( \int_{0}^{\frac{x-a}{b-a}}\left( qt\right) ^{p}%
\begin{array}{c}
_{0}d_{q}t%
\end{array}%
\right) ^{\frac{1}{p}} \\
\times \left( \int_{0}^{\frac{x-a}{b-a}}%
\begin{array}{c}
\left\vert _{a}D_{q}f\left( tb+\left( 1-t\right) a\right) \right\vert ^{r}%
\end{array}%
\begin{array}{c}
_{0}d_{q}t%
\end{array}%
\right) ^{\frac{1}{r}} \\
+\left( \int_{\frac{x-a}{b-a}}^{1}\left( 1-qt\right) ^{p}%
\begin{array}{c}
_{0}d_{q}t%
\end{array}%
\right) ^{\frac{1}{p}} \\
\times \left( \int_{\frac{x-a}{b-a}}^{1}%
\begin{array}{c}
\left\vert _{a}D_{q}f\left( tb+\left( 1-t\right) a\right) \right\vert ^{r}%
\end{array}%
\begin{array}{c}
_{0}d_{q}t%
\end{array}%
\right) ^{\frac{1}{r}}%
\end{array}%
\right]
\end{equation*}%
\begin{equation*}
\leq \left( b-a\right) \left[
\begin{array}{c}
\left( \int_{0}^{\frac{x-a}{b-a}}\left( qt\right) ^{p}%
\begin{array}{c}
_{0}d_{q}t%
\end{array}%
\right) ^{\frac{1}{p}} \\
\times \left( \int_{0}^{\frac{x-a}{b-a}}\left[ t\left\vert _{a}D_{q}f\left(
a\right) \right\vert ^{r}+\left( 1-t\right) \left\vert _{a}D_{q}f\left(
b\right) \right\vert ^{r}\right]
\begin{array}{c}
_{0}d_{q}t%
\end{array}%
\right) ^{\frac{1}{r}} \\
+\left( \int_{\frac{x-a}{b-a}}^{1}\left( 1-qt\right) ^{p}%
\begin{array}{c}
_{0}d_{q}t%
\end{array}%
\right) ^{\frac{1}{p}} \\
\times \left( \int_{\frac{x-a}{b-a}}^{1}\left[ t\left\vert _{a}D_{q}f\left(
a\right) \right\vert ^{r}+\left( 1-t\right) \left\vert _{a}D_{q}f\left(
b\right) \right\vert ^{r}\right]
\begin{array}{c}
_{0}d_{q}t%
\end{array}%
\right) ^{\frac{1}{r}}%
\end{array}%
\right]
\end{equation*}%
\begin{equation*}
\leq \left( b-a\right) \left[
\begin{array}{c}
\left( \int_{0}^{\frac{x-a}{b-a}}qt%
\begin{array}{c}
_{0}d_{q}t%
\end{array}%
\right) ^{\frac{1}{p}} \\
\times \left( \left\vert _{a}D_{q}f\left( a\right) \right\vert ^{r}\int_{0}^{%
\frac{x-a}{b-a}}t%
\begin{array}{c}
_{0}d_{q}t%
\end{array}%
+\left\vert _{a}D_{q}f\left( b\right) \right\vert ^{r}\int_{0}^{\frac{x-a}{%
b-a}}\left( 1-t\right)
\begin{array}{c}
_{0}d_{q}t%
\end{array}%
\right) ^{\frac{1}{r}} \\
+\left( \int_{\frac{x-a}{b-a}}^{1}\left( 1-qt\right) ^{p}%
\begin{array}{c}
_{0}d_{q}t%
\end{array}%
\right) ^{\frac{1}{p}} \\
\times \left( \left\vert _{a}D_{q}f\left( a\right) \right\vert ^{r}\int_{%
\frac{x-a}{b-a}}^{1}t%
\begin{array}{c}
_{0}d_{q}t%
\end{array}%
+\left\vert _{a}D_{q}f\left( b\right) \right\vert ^{r}\int_{\frac{x-a}{b-a}%
}^{1}\left( 1-t\right)
\begin{array}{c}
_{0}d_{q}t%
\end{array}%
\right) ^{\frac{1}{r}}%
\end{array}%
\right] .
\end{equation*}
Using $\left( \ref{2-12}\right) $-$\left( \ref{2-15}\right) $ in $\left( \ref%
{2-17}\right) $, we obtain the desired result in $\left( \ref{2-16}\right) $%
. This ends the proof.
\end{proof}

\begin{remark}
In Theorem \ref{2.10}, many different inequalities could be derived
similarly to Corollary \ref{2.9}.
\end{remark}

\section{Conclusion}
Utilizing mappings whose first derivatives absolute values are quantum differentiable convex, we establish some  quantum integral inequalities of Ostrowski type in terms of the discovered  quantum Montgomery identity.  Furthermore, we investigate the important relevant connections between the results obtained in this work with those introduced in earlier published papers. Many  sub-results can be derived from our main results by considering the special variable value for $x\in[a,b]$,  some fixed  value for $r$, as well as $q\rightarrow 1^{-}$. It is worthwhile to mention that certain quantum inequalities  presented in this work generalize parts of the very recent results given by Alp et al. (2018) and Noor et al. (2016).  With these contributions, we hope to motivate the interested researchers to explore this fascinating field of the quantum integral inequality based on the techniques and ideas developed in this article.\\

\noindent\textbf{Acknowledgements}
The first author would like to thank Ondokuz May\i s University for being a
visiting professor and providing excellent research facilities.\\

\noindent\textbf{Competing interests }\\
The authors declare that they have no competing interests.

\noindent\textbf{Authors' contributions}\\
All authors contributed equally to the writing of this paper. All authors
read and approved the final manuscript.\\

\end{document}